\documentclass[preprint,11pt]{elsarticle}
\usepackage{mathtools}

\usepackage{amsmath}
\usepackage{nicefrac}
\usepackage{booktabs}
\usepackage{multirow}
\usepackage{amssymb}
\usepackage{amsfonts,latexsym}
\usepackage{epsfig}
\usepackage{amsthm}
\usepackage{epstopdf}
\usepackage{graphicx}
\usepackage{color}
\usepackage{comment}
\usepackage{ulem}
\usepackage{tikz}
\usepackage{float}
\usepackage{hyperref}
\usepackage{algorithm}
\usepackage{algorithmic}

\newcommand{\calK}{\mathcal{K}}

\usepackage{mathtools}
\usepackage[scr=boondoxupr]{mathalpha}

\usetikzlibrary{decorations.markings}
\usepackage[top=3cm,bottom=4cm,right=3cm,left=3cm]{geometry}
\numberwithin{equation}{section}
\newtheorem{exe}{Example}
\newtheorem{theorem}{Theorem}[section]
\newtheorem{proposition}{Proposition}[section]

\newtheorem{definition}{Definition}[section]
\newtheorem{remark}{Remark}[section]

\journal{arXiv preprint}
\begin{document}
\begin{frontmatter}
\title{A randomized global GMRES method for linear systems with multiple right-hand sides}

\author[1]{A. Badahmane}
\ead{badahmane.achraf@gmail.com}
\author[1]{I. Ezzagouri}
\ead{Issa.EZZAGOURI@um6p.ma}
\author[2]{Xian-Ming Gu\corref{cor1}}
\ead{guxianming@live.cn}
\cortext[cor1]{Corresponding author.}
\address[1]{The UM6P Vanguard Center, Mohammed VI Polytechnic  University  (UM6P), Rocade Rabat-Sal\'{e}, Technopolis, 11103, Morocco}
\address[2]{School of Mathematics, Southwestern University of Finance and Economics, Chengdu,
Sichuan 611130, P.R. China}
%%%%%%%%%
\begin{abstract}
In this paper, we develop a new Randomized Global Generalized Minimum Residual (RGl-GMRES) algorithm for efficiently computing solutions to large-scale linear systems with multiple right-hand sides. The proposed method builds on a recently developed randomized global Gram–Schmidt process, in which sketched Frobenius inner products are employed to approximate the exact Frobenius inner products of high-dimensional matrices. We give some new convergence results of the RGl-GMRES method for such multiple linear systems. In the case where the coefficient matrix $A$ is diagonalizable, we derive new upper bounds for the randomized Frobenius norm of the residual. In this paper, we study how to introduce matrix sketching in this algorithm. It allows us to reduce the dimension of the problem in one of the main steps of the algorithm. We also show how to apply the RGl-GMRES method for solving the Lyapunov matrix equation. To validate the effectiveness and practicality of this approach, we conduct several numerical experiments, which demonstrate that our RGl-GMRES method is competitive with the global GMRES method for solving large-scale multiple linear systems and Lyapunov matrix equation.
\end{abstract}

\begin{keyword}
  % 7. Keywords that describe the paper
  % Type a list of keywords (also known as key phrases or key terms), one per line to characterize your submission. You should specify at least three keywords.
Multiple right-hand sides; Global GMRES; Randomized sketching; Global Gram-Schmidt process; Lyapunov matrix equation
\end{keyword}
\end{frontmatter}

% REQUIRED
%\begin{MSCcodes}
%65F22 % Ill-posedness, regularization
%65F10 % Iterative methods for linear systems
%65K10 % optimization and variational techniques
%15A29 %Inverse problems in linear algebra
%\end{MSCcodes}
\section{Introduction} % (we should extend this section and include the recent developments of the randomized Krylov subspace methods)
In many scientific and engineering simulations (see, e.g., \cite{Puzyrev2015,Lee2017,JELICH2021}), we need to solve linear systems with multiple right-hand sides:
\begin{equation}
\label{MRHS}
AX=B,
\end{equation}
where \(A \in \mathbb{R}^{n \times n}\) is a large, sparse, nonsymmetric real matrix, and \(B, X \in \mathbb{R}^{n \times s}\) with \(s \ll n\). Sparse direct methods are highly efficient for medium-sized problems via matrix factorization (e.g., LU decomposition); however, they become computationally prohibitive as \(n\) grows large \cite{PUZYREV2016}. Alternatively, applying standard modern iterative methods (e.g., Krylov subspace methods) to each right-hand side individually is inefficient because computational information is not shared across systems \cite{Gutknecht2007}. To overcome this limitation, block versions of iterative methods are preferred, as they generate iterates for all systems simultaneously and leverage shared subspace information. They are attractive not only because of this numerical feature (larger search subspace), but also from a computational view
point as they enable the use of BLAS3 like implementation.

Over the past few decades, various block Krylov subspace methods have been developed for multiple linear systems \eqref{MRHS}. Early work includes the block biconjugate gradient (Bl-BCG) algorithm \cite{OLEARY1980}, its stabilized variant \cite{Simoncini97}, and the block BiCGSTAB method \cite{Guennouni03}. Based on the block Arnoldi process, the block generalized minimal residual (Bl-GMRES) algorithm was established in \cite{Vital1990} and further investigated in \cite{SIMONCINI96,SIMONCINI1996,MORGAN2005,Baker2006,Calandra2013}. Using the block Bi-Lanczos process, Freund and Malhotra \cite{FREUND1997} developed the block quasi-minimal residual (BQMR) algorithm. Furthermore, Guennouni et al. \cite{ELGUENNOUNI2004} derived block Lanczos-type methods (such as block Lanczos/Orthodir and BIODIR) using matrix-valued orthogonal polynomials and Schur complements, while further advances in block minimal residual methods were reported in \cite{BOUYOULI2007}. To handle linearly or near-linearly dependent vectors during iterations, deflation procedures are widely used \cite{FREUND1997,Gutknecht2007}. To address inexact breakdowns and improve efficiency, Robbé and Sadkane \cite{ROBBE2006} retained directions from near-converged solutions, whereas Agullo et al. \cite{Agullo2014} combined inexact breakdown detection with deflated restarting specifically for block GMRES.

% The seed methods are introduced by some authors to solve the matrix equation \eqref{MRHS}. This class of methods consists of selecting a single system (called the seed system), generating the Krylov subspace by the Arnoldi or Bi-Lanczos process, and projecting the residuals of other systems onto the Krylov subspace. The procedure is repeated with another seed system until all the systems are solved. The methods have been developed in [21, 19]. The seed seed GMRES method is studied in [19], [13], respectively. Lotstedt and Nilsson [22] proposed a minimal residual interpolation method for linear systems with multiple right-hand sides.

To solve multiple linear systems \eqref{MRHS}, seed methods offer an efficient alternative by selecting a primary system to build a shared Krylov subspace and projecting other residuals onto it sequentially. While initially investigated for symmetric positive definite systems \cite{Smith1989,Chan1997}, this strategy was subsequently adapted to non-Hermitian cases through algorithms such as Seed-GMRES \cite{Simoncini1995} and Seed-BiCGSTAB \cite{Jbilou2016}. Over the years, sophisticated extensions have been proposed to accelerate convergence, notably the seed weighted GMRES method \cite{Elbouyahyaoui18} and seed GMRES augmented with eigenvectors \cite{Gu2002}.

Recently, the global Krylov subspace methods have been generated by projecting globally the initial matrix residual onto a matrix Krylov subspace. Using the global Arnoldi process, Jbilou et al. \cite{JBILOU1999} presented the global full orthogonalization method (Gl-FOM) and the global GMRES method (Gl-GMRES) for multiple linear systems \eqref{MRHS}. Since then, some global methods based on global Arnoldi or Hessenberg process have been used to solve multiple linear systems \eqref{MRHS} \cite{Heyouni2001,Heyouni2005,LIN2005,Du2008}. In the case where the coefficient matrix 
 is diagonalizable, Bellalij et al. \cite{BELLALIJ2008} obtained some new convergence results of Gl-GMRES for nonsymmetric matrix equations.  Using global Bi-Lanczos process, Jbilou et al. \cite{Jbilou2005} proposed the global biconjugate gradient method (Gl-BCG) and developed one of the most effective variants of Gl-BCG, named as the global BiCGSTAB algorithm. In order to avoid multiplication by the transpose of a matrix, Zhang and Dai \cite{Zhang2008} introduced the global conjugate gradient squared method, which, along with the global TFQMR method presented in \cite{Zhang2013}, demonstrated superior performance to the Gl-BCG method.

In recent years, randomized Krylov subspace methods have become increasingly popular in the field of iterative algorithms for solving large-scale linear systems and (generalized) eigenvalue problems \cite{Martinsson2020,Nakatsukasa2024,Saibaba2025}. These methods employ a randomized QR-based Arnoldi process to generate a non-orthogonal basis of the Krylov subspace, thereby reducing computational costs and enabling the development of various Krylov subspace algorithms, such as the randomized GMRES method \cite{Balabanov22}. Furthermore, for instance, Balabanov and Grigori \cite{Balabanov2025} presented a randomized block Gram-Schmidt process to construct the randomized Bl-GMRES method for multiple linear systems \eqref{MRHS}. While it retains the theoretical convergence bound of classical Bl-GMRES (converging in at most $\lceil n/s \rceil$ iterations), its computational cost escalates rapidly as the number of right-hand sides $s$ increases. In contrast, the global GMRES (Gl-GMRES) method with further investigations \cite{BELLALIJ2008} utilizes a global Arnoldi process, making its computational complexity significantly less sensitive to $s$ \cite{JBILOU1999}.

Compared to alternative solver classes for multiple linear systems, global Krylov subspace methods offer distinct advantages in numerical robustness. Specifically, the above block Krylov subspace methods are prone to (near) linear dependence among the columns of residual matrices, which often causes slow convergence or algorithm breakdown \cite{Langou2003}. Meanwhile, seed Krylov subspace methods are primarily effective when the right-hand sides vary slowly across systems \cite{Elbouyahyaoui18}. Global Krylov subspace methods circumvent both limitations, making them applicable to a much broader range of problems. Nevertheless, when the number of right-hand sides $s$ is extremely large, standard Gl-GMRES faces its own severe bottleneck: the heavy computational and memory burden of Frobenius inner products and orthogonalization procedures during global Krylov subspace generation. This challenge provides a compelling motivation to incorporate randomization techniques into the global Krylov subspace framework, aiming to significantly alleviate computational overhead while preserving the superior stability and convergence behavior of Gl-GMRES. Furthermore, as a natural extension, we demonstrate that the proposed randomized global framework can be readily applied to solve continuous-time Lyapunov matrix equations of the form $AX + XA^T = -BB^T$.

To this end, in this work, we propose the Randomized Gl-GMRES (RGl-GMRES) method. By substituting costly exact Frobenius inner products with randomized estimators at key stages of the proposed algorithm, RGl-GMRES achieves substantial computational savings without compromising theoretical convergence properties. The primary objective of this paper is to establish the RGl-GMRES framework for large-scale multiple linear systems \eqref{MRHS} and to thoroughly analyze its convergence characteristics both theoretically and numerically.

% One might consider applying methods for a single vector, such as those described in , to each problem  $A x_{i} = b_{i}$, it is
% well known for linear systems.
% Instead of applying a standard iterative method to the solution of each one of the  problems with several right-hand sides:
% \begin{eqnarray}
% \label{MRHS}
%     A x_{i} = b_{i}, \quad i = 1, \ldots, s,
% \end{eqnarray}
% independently, it is often more efficient to apply a global method to \eqref{MRHS}. However, that global Krylov approaches treating all columns $b_{i}$ at once can be computationally advantageouss; see, e.g. \cite{Jbilou2,BELLALIJ}. 

The rest of this paper is organized as follows. Section \ref{sec2} is devoted to presenting the
preliminary concepts and definitions of the Gl-GMRES method. The proposed RGl-GMRES algorithm is outlined in Section \ref{sec:randomized}. In Section \ref{sec4x} we apply the RGl-GMRES for solving the Lyapunov matrix equation. Section \ref{sec4} reports numerical simulations for both the classical Gl-GMRES method and its randomized variant. Finally, Section \ref{sec5} concludes the paper with recommendations on RGl-GMRES usage tailored to specific problem traits.

\textbf{Notations.}
Let \( \mathbb{R}^{n\times s} \) denote the space of real \(n\times s\) matrices. 
For \( Y, Z \in \mathbb{R}^{n\times s} \), we define the \textbf{Frobenius inner product} as
\[
\langle Y, Z \rangle_F = \operatorname{Trace}\bigl(Y^T Z\bigr).\] 
The associated norm is the \textbf{Frobenius norm}, denoted by $\|X\|_{F}=\sqrt{\mathrm{Trace}\left(X^{T}X\right)}$ for $X\in\mathbb{R}^{n\times s}$ .
A set of vectors (or matrices) in \(\mathbb{R}^{n\times s}\) is said to be \textbf{\(F\)-orthonormal} 
if it is orthonormal with respect to the inner product \(\langle \cdot, \cdot \rangle_F\). Finally, the \textbf{Kronecker product} of matrices \(C\) and \(D\) is defined as
\[
C \otimes D = [\,c_{i,j} D\,],
\]
where \(c_{i,j}\) are the entries of \(C\). 
\section{Global GMRES (Gl-GMRES) method}
\label{sec2}
As described above, it is well-known that the global Krylov subspace methods (e.g. Gl-GMRES) outperform other block iterative methods for solving large-scale multiple linear systems \eqref{MRHS} when the coefficient matrix is large and nonsymmetric. In this section, we briefly review the concept of the global GMRES method~\cite{Badahmane2019,JBILOU1999} for solving such linear systems.
%\begin{definition}~(\cite{Badahmane2019,Jbilou2005}).
%Let $\mathcal{X}$ and $\mathcal{Y}$ are two matrices of dimension $n\times s$, we consider the scalar product 
%\begin{eqnarray}
%<\mathcal{X},\mathcal{Y}>_{F}=\mathrm{Trace}(\mathcal{X}^{T}\mathcal{Y}),
%\end{eqnarray}
%where $\text{Trace}$ is the trace of the square matrix $\mathcal{X}^{T}\mathcal{Y}$, refers to the sum of its diagonal entries.
%The associated norm is the Frobenius norm, denoted $\|.\|_{F}$, which can be expressed as follows:
%\begin{eqnarray}
% \label{normF}
%
%\end{eqnarray}
%\end{definition}
\begin{definition}
[Global Krylov subspace \cite{JBILOU1999,Badahmane2019}] Let $V\in\mathbb{R}^{n\times s}$, the global Krylov subspace $\mathcal{K}_{k}(A,V)$  is the
	subspace  spanned by the  matrices $\{V, AV,\cdots,A^{k-1}V\}$.
\end{definition}
\begin{remark}
\label{Kry}
Let $V\in\mathbb{R}^{n\times s}$. By the definition of the global Krylov subspace $\mathcal{K}_{k}(A,V)$, any matrix $\mathcal{Z}\in\mathcal{K}_{k}(A,V)$ can be expressed as
\begin{eqnarray}
\mathcal{Z}\in \mathcal{K}_{k}(A,V)\iff \mathcal{Z}=\sum_{i=1}^{k}\psi_{i}A^{i-1}V,~~
\psi_{i}\in\mathbb{R},~i=1,\ldots,k.  \nonumber
\end{eqnarray}
In other words,  $\mathcal{K}_k(A,V)$ consists of all matrices in $\mathbb{R}^{n\times s}$ that can be represented in the form $\mathcal{Z}=\Theta_k(A)V$, where $\Theta_k(\cdot)$ is a polynomial of degree at most equal $k-1$. \cite{JBILOU1999,Badahmane2019}
\end{remark}
% \begin{definition}
% 	The Kronecker product of $M$ and $J$, where $M$ is a matrix with dimension $m \times r$ and $J$ is a matrix with dimensions $n \times s$, can be expressed in a specific mathematical form
% 	\begin{eqnarray}
%  \label{Kronecker}
% 	M\otimes J=\left(\begin{array}{ccc}
% 	M_{11 }J &\hdots  &  M_{1r}J\\
% 	\vdots &\ddots &\vdots    \\
% 	M_{m1}J & \hdots &  M_{mr}J
% 	\end{array}\right)\in\mathbb{R}^{m n\times rs}.
%    \end{eqnarray}
% \end{definition}
\begin{definition}[Diamond Product]\label{Diamond}
	Let $Y$ and $\mathcal{Z}$ be  matrices of  dimensions $n_{u}\times l s$ and $n_{u}\times l r$, respectively. The matrices $Y$ and $\mathcal{Z}$ are constructed by concatenating $n_{u}\times l$ matrices $Y_{i}$ and $\mathcal{Z}_{j}$ (for $i = 1,...., s$ and $j = 1,...,r)$. The symbol 
 $\diamond$ represents a specific product defined by the following formula:
	$$ Y^T
	\diamond \mathcal{Z}=
	\left(
	\begin{array}{cccc}
	\langle Y_{1},\mathcal{Z}_{1} \rangle_{F} & \langle Y_{1},\mathcal{Z}_{2} \rangle_{F} & \ldots & \langle Y_{1},\mathcal{Z}_{r} \rangle_{F} \\
	\langle Y_{2},\mathcal{Z}_{1} \rangle_{F}&\langle Y_{2},\mathcal{Z}_{2} \rangle_{F}&\ldots&\langle Y_{2},\mathcal{Z}_{r} \rangle_{F}\\
	\vdots&\vdots&\ddots&\vdots\\
	\langle Y_{s},\mathcal{Z}_{1} \rangle_{F}&\langle Y_{s},\mathcal{Z}_{2} \rangle_{F}&\ldots&\langle Y_{s},\mathcal{Z}_{r} \rangle_{F}
	\end{array}
	\right)\in\mathbb{R}^{s\times r}.$$ 
    This product is commonly used in the formulation of global Krylov
subspace methods; see, e.g., \cite{JBILOU1999,Badahmane2019}.
\end{definition}
%%%%%%%%%%%%%%
\subsection{Gl-Arnoldi process and Gl-GMRES method}
\label{Gl-Arnoli}
We use the modified global Arnoldi (Gl-Arnoldi) process \cite[Algorithm 2.2]{JBILOU1999}, which is based on the modified
Gram--Schmidt process, for building an $F$-orthonormal basis $V_1, V_2, \ldots, V_k$ of $\mathcal{K}_k(A,V)$. In other words,
\begin{equation}
\operatorname{Trace}(V_i^T V_j) = \delta_{ij}, 
\quad i,j = 1,\cdots,k,
\end{equation}
where $\delta_{ij}$ is the Kronecker symbol. 

Let $X_0$ be the initial approximate solution of Eq. \eqref{MRHS} and let $R_0 = B - AX_0$ be the corresponding residual. Before describing the global GMRES methods, we give
some property of the previous algorithm.
%\begin{algorithm}
%\caption{Gl-Arnoldi process}
%\begin{algorithmic}[1]
%\STATE $V_1 = R_0/\|R_0\|_F$
%\FOR{$j = 1,2,\ldots,k$}
 % \STATE $W := AV_j$
  %\FOR{$i = 1,2,\ldots,j$}
   % \STATE $h_{i,j} = \langle W, V_i \rangle_F$
    %\STATE $W = W - h_{i,j} V_i$
  %\ENDFOR
  %\STATE $h_{j+1,j} = \|W\|_F$
  %\STATE $V_{j+1} =W/h_{j+1,j}$
%\ENDFOR
%\end{algorithmic}
%\end{algorithm}

%\medskip
%\noindent
\begin{proposition}
Assume that $h_{i+1,i} \neq 0$ for $i = 1,\ldots,k$, and let
\[
\mathcal{V}_k = [V_1, V_2, \ldots, V_k],
\]
where $V_i$, for $i=1,\ldots,k$, are the matrices computed by Gl-Arnoldi process.
Then the following relations hold:
\begin{align}
\label{arno}
A\mathcal{V}_k &= \mathcal{V}_k\diamond H_k + h_{k+1,k}
      [\, O, \ldots, O, V_{k+1} \,],  \nonumber \\
A\mathcal{V}_k &= \mathcal{V}_{k+1}\diamond \bar{H}_k,
\end{align}
where $H_k \in \mathbb{R}^{k \times k}$ is an upper Hessenberg matrix whose entries are \(h_{i,j} = \operatorname{Trace}(V_i^T A V_j)\) and
$\widetilde{V}_k = [V_1, \ldots, V_{k+1}]$.
\[
\mathcal{V}_{k+1} = [V_1, \ldots, V_k, V_{k+1}] \in \mathbb{R}^{n\times (k+1)s},
\qquad
\bar{H}_k \in \mathbb{R}^{(k+1)\times k}.
\]
\end{proposition}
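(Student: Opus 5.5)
The plan is to read the relations off the recurrence that defines the modified Gl-Arnoldi process (\cite[Algorithm 2.2]{JBILOU1999}). I will write $H_k\diamond$ products explicitly as block linear combinations. Here the "$\diamond$" in $\mathcal{V}_k\diamond H_k$ means the block combination $\mathcal{V}_k (H_k\otimes I_s)$. Its $j$-th block column is $\sum_{i=1}^k h_{i,j}V_i$. With this convention both identities in \eqref{arno} become column-by-column statements about $AV_j$.

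\textbf{Step 1: the recurrence.} At step $j$, the process sets $W=AV_j$ and, for $i=1,\dots,j$, computes $h_{i,j}=\langle W,V_i\rangle_F$ and updates $W\leftarrow W-h_{i,j}V_i$. It then sets $h_{j+1,j}=\|W\|_F$ and $V_{j+1}=W/h_{j+1,j}$; this is well defined since $h_{j+1,j}\neq 0$ by hypothesis. Unwinding the loop gives
\[
AV_j=\sum_{i=1}^{j+1} h_{i,j}V_i,\qquad j=1,\dots,k.
\]
Define $h_{i,j}=0$ for $i>j+1$, so that $H_k=(h_{i,j})_{i,j\le k}$ is upper Hessenberg. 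Let $\bar H_k$ be $H_k$ with the extra row $h_{k+1,k}e_k^T$ appended.

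\textbf{Step 2: $F$-orthonormality and the formula for $h_{i,j}$.} By induction on $j$, the matrices $V_1,\dots,V_{j+1}$ are $F$-orthonormal. In modified Gram--Schmidt the quantity $h_{i,j}$ is computed against the partially updated $W=AV_j-\sum_{l<i}h_{l,j}V_l$. Since $\langle V_l,V_i\rangle_F=0$ for $l<i$, this equals $\langle AV_j,V_i\rangle_F=\operatorname{Trace}(V_i^TAV_j)$. After the loop, $W$ is $F$-orthogonal to each $V_i$, $i\le j$, so normalizing yields $V_{j+1}$ $F$-orthonormal to the previous ones. This confirms the stated entry formula.

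\textbf{Step 3: assemble the block identities.} Stack the relation from Step 1 for $j=1,\dots,k$ into block columns. For $j<k$, the right-hand side $\sum_{i\le j+1}h_{i,j}V_i$ involves only $V_1,\dots,V_k$, so it is the $j$-th block of $\mathcal{V}_k\diamond H_k$. For $j=k$, we get the $k$-th block of $\mathcal{V}_k\diamond H_k$ plus $h_{k+1,k}V_{k+1}$. This proves the first relation. The second relation is the same computation using $\bar H_k$ and $\mathcal{V}_{k+1}$: the last row of $\bar H_k$ contributes exactly $h_{k+1,k}[O,\dots,O,V_{k+1}]$.

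There is no real obstacle in this argument. The only delicate point is notational: the relations must be interpreted via the block/Kronecker action $\mathcal{V}(H\otimes I_s)$ rather than the Frobenius-pairing definition of $\diamond$ in Definition~\ref{Diamond}, which would not produce an $n\times ks$ matrix. I will also verify the modified versus classical Gram--Schmidt equality of coefficients in Step 2.
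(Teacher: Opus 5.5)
The paper gives no proof of this proposition (it is quoted from the global Arnoldi literature \cite{JBILOU1999}). Your argument is the standard derivation and it is correct: unwind the loop to get $AV_j=\sum_{i=1}^{j+1}h_{i,j}V_i$, use $F$-orthonormality to identify the modified Gram--Schmidt coefficients with $\operatorname{Trace}(V_i^TAV_j)$, and read off block columns, with $\mathcal{V}_k\diamond H_k$ rightly interpreted as $\mathcal{V}_k(H_k\otimes I_s)$. The only thing you leave implicit is the entry formula for $i\ge j+1$, which follows at once by pairing that expansion with $V_i$ and using the orthonormality of $V_1,\dots,V_{k+1}$.
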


The global GMRES method constructs, at step $k$, the approximation $X_k$
%\begin{comment}
satisfying the following two relations:
\begin{equation}
X_k - X_0 \in \mathcal{K}_k(A,R_0),
\qquad
\langle A^j R_0, R_k \rangle_F = 0,
\quad j = 1,\ldots,k.
\end{equation}
The residual $R_k = B - AX_k$ satisfies the minimization property
\begin{equation}
\label{eq:gmres_min}
\|R_k\|_F
= \min_{Z \in \mathcal{K}_k(A,R_0)} \|R_0 - AZ\|_F,
%\tag{4.1}
\end{equation}
which is equivalent to solving the following least squares problem:
\begin{equation}
\|R_k\|_F = \min_{y \in \mathbb{R}^k} \left\| \beta e_1^{(k+1)} - \bar{H}_k y \right\|_2,\quad \|R_0\|_F = \beta,
\label{LS2}
\end{equation}
where \(\bar{H}_k\) is the \((k + 1) \times k\) upper Hessenberg matrix and \(e_1^{(k+1)}\) is the first canonical basis vector in \(\mathbb{R}^{k+1}\) and \(y\) is a vector of \(\mathbb{R}^k\). A detailed description of this method is given in \cite{JBILOU1999,LIN2005}. 

It is worth noting that for Gl-GMRES, we have to solve only one $k$-dimensional least-squares problem \eqref{LS2}
to get the $k$-dimensional vector $y_k$. This leads to considerable savings compared to the Bl-GMRES
algorithm where the Hessenberg matrix $H_k$ is of dimension $ks \times ks$ \cite{Vital1990} or to the simple GMRES algorithm
where $s$ least-squares problems have to be solved at each iteration \cite{JBILOU1999}. Moreover, we recall that the value of $k$ is limited by the storage constraints and by avoiding
rounding errors. Hence we have to use a restarting strategy to address such
problems. 
%\end{comment}
%\begin{comment}
\subsection{Convergence analysis of the Gl-GMRES method}
\label{sec2.2}
In this section, we recall the main convergence results for the Gl-GMRES method established in \cite{BELLALIJ2008}. 
Let $A = Z \Lambda Z^{-1}$ be the spectral decomposition of $A$, where $\Lambda = \operatorname{diag}(\lambda_1, \dots, \lambda_n)$ contains the eigenvalues of $A$ and $Z$ is the corresponding eigenvector matrix. 
Let the initial residual be decomposed as $R_0 = Z\beta$, where $\beta = [\beta^{(1)}, \dots, \beta^{(s)}] \in \mathbb{R}^{n \times s}$. 

Define the normalized weight vector $\gamma = (\gamma_1, \dots, \gamma_n)^T$ by
\begin{equation}
\label{eq:gamma_gl}
\gamma_i = \frac{\sum_{l=1}^s |\beta_i^{(l)}|^2}{\|\beta\|_F^2}, \quad i = 1, \ldots, n,
\end{equation}
and let $D_\gamma = \operatorname{diag}(\gamma_1, \ldots, \gamma_n)$ with $\gamma_i \ge 0$ and $\sum_{i=1}^n \gamma_i = 1$. The Krylov--Vandermonde matrix $\overline{V}_{k+1} \in \mathbb{C}^{n \times (k+1)}$ is defined as
\begin{equation}
\label{eq:vandermonde}
\overline{V}_{k+1} = 
\begin{bmatrix}
1 & \lambda_1 & \cdots & \lambda_1^k \\
\vdots & \vdots & & \vdots \\
1 & \lambda_n & \cdots & \lambda_n^k
\end{bmatrix}.
\end{equation}
%%%%%%%%%%%%%%%%%%
\begin{theorem}[\cite{BELLALIJ2008}]
\label{thm:gl_convergence}
Let $R_k = B - AX_k$ be the $k$-th residual obtained by the Gl-GMRES method applied to~\eqref{MRHS}. Let $\mathcal{P}_k$ denote the set of polynomials of degree at most $k$ satisfying $p(0) = 1$, and let $\kappa_2(Z) = \|Z\|_2 \|Z^{-1}\|_2$. Then
\begin{equation}
\label{eq:gl_bound_gamma}
\frac{\|R_k\|_F^2}{\|\beta\|_F^2} \le \frac{\|Z\|_2^2}{e_1^T (\overline{V}_{k+1}^H D_\gamma \overline{V}_{k+1})^{-1} e_1},
\end{equation}
and consequently,
\begin{equation}
\label{eq:gl_minmax_bound}
\frac{\|R_k\|_F}{\|R_0\|_F} \le \kappa_2(Z) \min_{p \in \mathcal{P}_k, p(0)=1} \max_{\lambda \in \operatorname{Sp}(A)} |p(\lambda)|,
\end{equation}
where $\operatorname{Sp}(A)$ denotes the spectrum of $A$.
\end{theorem}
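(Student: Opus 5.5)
The plan is to reduce the global minimization \eqref{eq:gmres_min} to a weighted polynomial least-squares problem on the spectrum. By Remark~\ref{Kry}, every $Z\in\mathcal{K}_k(A,R_0)$ has the form $Z=\Theta_k(A)R_0$ with $\deg\Theta_k\le k-1$. Hence $R_0-AZ=p(A)R_0$ for a polynomial $p$ of degree at most $k$ with $p(0)=1$, and
\[
\|R_k\|_F=\min_{p\in\mathcal{P}_k}\|p(A)R_0\|_F .
\]
Using $A=Z\Lambda Z^{-1}$ and $R_0=Z\beta$, I get $p(A)R_0=Z\,p(\Lambda)\beta$. Then $\|p(A)R_0\|_F\le\|Z\|_2\|p(\Lambda)\beta\|_F$, since $\|MN\|_F\le\|M\|_2\|N\|_F$. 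Computing column by column,
\[
\|p(\Lambda)\beta\|_F^2=\sum_{l=1}^s\sum_{i=1}^n|p(\lambda_i)|^2|\beta_i^{(l)}|^2=\|\beta\|_F^2\sum_{i=1}^n\gamma_i|p(\lambda_i)|^2 ,
\]
which is exactly where the weights \eqref{eq:gamma_gl} enter. This gives
\[
\frac{\|R_k\|_F^2}{\|\beta\|_F^2}\le\|Z\|_2^2\min_{p\in\mathcal{P}_k}\sum_i\gamma_i|p(\lambda_i)|^2 .
\]

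The next step is to evaluate this weighted minimum. Write $p(\lambda)=\sum_{j=0}^k c_j\lambda^j$ with $c=(c_0,\dots,c_k)^T$. Then $(p(\lambda_i))_i=\overline{V}_{k+1}c$, so the quantity to minimize is $c^H\overline{V}_{k+1}^HD_\gamma\overline{V}_{k+1}c$ subject to $e_1^Tc=1$. Let $G=\overline{V}_{k+1}^HD_\gamma\overline{V}_{k+1}$. If $G$ is positive definite, the constrained quadratic minimum is $1/(e_1^TG^{-1}e_1)$. This follows from Cauchy--Schwarz, $1=|e_1^Tc|^2\le (c^HGc)(e_1^TG^{-1}e_1)$, with equality at $c=G^{-1}e_1/(e_1^TG^{-1}e_1)$. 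That yields \eqref{eq:gl_bound_gamma}.

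The main obstacle is the singular case, when fewer than $k+1$ distinct eigenvalues carry positive weight. Then $G$ is not invertible, and the right-hand side must be read as $0$ (with the convention $e_1^TG^{-1}e_1=\infty$). In that case one can exhibit a $p\in\mathcal{P}_k$ vanishing at every weighted eigenvalue. Such a $p$ exists because those eigenvalues are nonzero, $A$ being nonsingular, so the constraint $p(0)=1$ can still be met. This gives $R_k=0$ and the inequality holds trivially. I would state this convention explicitly.

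For \eqref{eq:gl_minmax_bound}, I would bound the weighted sum instead by $\max_{\lambda\in\operatorname{Sp}(A)}|p(\lambda)|^2\sum_i\gamma_i$, which equals the squared max since $\sum_i\gamma_i=1$. Finally, $\|\beta\|_F=\|Z^{-1}R_0\|_F\le\|Z^{-1}\|_2\|R_0\|_F$. Taking square roots and minimizing over $p$ gives the factor $\kappa_2(Z)$.
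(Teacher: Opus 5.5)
The paper gives no proof of this theorem; it is quoted from Bellalij, Jbilou and Sadok. So there is no in-paper argument to match line by line. Your argument is a correct, self-contained proof of both bounds, modulo one point below.

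The only related argument in the paper is the proof of Theorem~\ref{thm:rgl_convergence}, which mirrors the ``consequently'' in the statement. There the min--max bound is obtained from the Gram-matrix bound \eqref{eq:gl_bound_gamma} using $\|\beta\|_F\le\|Z^{-1}\|_2\|R_0\|_F$ and the Bellalij--Saad--Sadok identity $\max_{\gamma}1/(e_1^T(\overline{V}_{k+1}^HD_\gamma\overline{V}_{k+1})^{-1}e_1)=(\min_{p}\max_{\lambda\in\operatorname{Sp}(A)}|p(\lambda)|)^2$.

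You instead bound $\sum_i\gamma_i|p(\lambda_i)|^2\le\max_\lambda|p(\lambda)|^2$ directly, which uses only the trivial half of that identity. Your route is more elementary and shows that \eqref{eq:gl_minmax_bound} needs neither \eqref{eq:gl_bound_gamma} nor any convex-duality result. The paper's route, through the equality, additionally identifies the min--max bound as exactly the worst case of \eqref{eq:gl_bound_gamma} over all weight vectors $\gamma$. Your explicit handling of a singular $G=\overline{V}_{k+1}^HD_\gamma\overline{V}_{k+1}$, which forces $R_k=0$, improves on the statement, since it tacitly assumes $G$ is invertible.

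The point to patch is real versus complex coefficients. By Remark~\ref{Kry} the global Krylov subspace has real coefficients, so $\|R_k\|_F$ is a minimum over real polynomials. But your optimizer $c=G^{-1}e_1/(e_1^TG^{-1}e_1)$, the polynomial $\prod_j(1-\lambda/\mu_j)$, and the min--max polynomial are in general complex when $A$ has non-real eigenvalues. The minimum over real $c$ is only $\ge$ the minimum over complex $c$, so substituting the latter into an upper bound is not justified as written.

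The fix is one line. For real $A$, $R_0$ and $p=p_r+ip_i\in\mathcal{P}_k$ with real-coefficient $p_r$, $p_i$, you have $p_r(0)=1$, $\deg p_r\le k$, and $\|p(A)R_0\|_F^2=\|p_r(A)R_0\|_F^2+\|p_i(A)R_0\|_F^2\ge\|R_k\|_F^2$. Hence $\|R_k\|_F\le\|p(A)R_0\|_F$ for every complex $p\in\mathcal{P}_k$, and the rest of your argument goes through unchanged. For complex data the Krylov space is taken over $\mathbb{C}$ and the issue does not arise.
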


\section{RGl-GMRES method}
\label{sec:randomized}
Although global Krylov subspace methods \cite{LIN2005,BELLALIJ2008,JBILOU1999} are effective for solving large-scale multiple linear systems~\eqref{MRHS}, including saddle point problems arising in constrained optimization, fluid dynamics, and mixed finite element discretizations \cite{Badahmane2019}, the computational overhead of orthonormalization and exact Frobenius inner products among matrix-valued basis blocks remains a major bottleneck as the problem dimension increases. To alleviate this cost, randomized sketching techniques have recently been introduced into Krylov subspace frameworks \cite{Tropp2022,Nakatsukasa2024}. In this section, we develop the RGl-GMRES method by embedding random sketching into the global Arnoldi process, analyze its theoretical convergence behavior, and establish its vectorized equivalence to classical randomized GMRES.

\subsection{Introduction to random sketching}
Let $\Theta \in \mathbb{R}^{\ell \times n}$ with $\ell \ll n$ be a sketching matrix. This matrix can be viewed as an embedding of subspaces of $\mathbb{R}^n$ into lower-dimensional subspaces of $\mathbb{R}^\ell$, and is referred to as an $\ell_2$-subspace embedding.

\begin{definition}[Subspace Embedding for Vector Spaces \cite{Balabanov22}]
\label{def:vec_embedding}
Let $\varepsilon \in (0, 1)$, and let $\Theta \in \mathbb{R}^{\ell \times n}$ with $\ell \ll n$ be a sketching matrix. The matrix $\Theta$ is said to be an $\varepsilon$-subspace embedding for a subspace $\mathcal{V} \subset \mathbb{R}^n$ if, for all $x \in \mathcal{V}$,
\begin{equation}
(1 - \varepsilon) \|x\|_2^2 \le \|\Theta x\|_2^2 \le (1 + \varepsilon) \|x\|_2^2,
\end{equation}
or equivalently,
\begin{equation}
(1 + \varepsilon)^{-1} \|\Theta x\|_2^2 \le \|x\|_2^2 \le (1 - \varepsilon)^{-1} \|\Theta x\|_2^2.
\end{equation}
\end{definition}

\begin{definition}[Matrix Subspace Embedding]
\label{def:matrix_embedding}
Let $V \in \mathbb{R}^{n \times k}$ have full column rank such that $\operatorname{range}(V) = \mathcal{V} \subset \mathbb{R}^n$, and let $\mathcal{U} \subset \mathbb{R}^{n \times s}$ be the matrix subspace defined by
\begin{equation*}
\mathcal{U} = \left\{ V X \in \mathbb{R}^{n \times s} : X \in \mathbb{R}^{k \times s} \right\}.
\end{equation*}
A matrix $\Theta \in \mathbb{R}^{\ell \times n}$ is called an $\varepsilon$-subspace embedding for $\mathcal{U}$ if, for all $X \in \mathbb{R}^{k \times s}$,
\begin{equation}
\label{eq3.3x}
(1 - \varepsilon) \|V X\|_F^2 \le \|\Theta V X\|_F^2 \le (1 + \varepsilon) \|V X\|_F^2,
\end{equation}
where $\|\cdot\|_F$ denotes the Frobenius norm.
\end{definition}

\begin{proposition}[Matrix embedding from vector embedding]
\label{prop:mat_from_vec}
If $\Theta \in \mathbb{R}^{\ell \times n}$ is an $\varepsilon$-subspace embedding for the vector subspace $\operatorname{range}(V) \subset \mathbb{R}^n$, then it is also an $\varepsilon$-subspace embedding for the matrix subspace $\mathcal{U} = \{ V X : X \in \mathbb{R}^{k \times s} \} \subset \mathbb{R}^{n \times s}$.
\end{proposition}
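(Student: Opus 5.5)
The plan is to reduce the matrix statement to the vector one column by column, using the fact that the squared Frobenius norm is the sum of the squared Euclidean norms of the columns. Fix an arbitrary $X \in \mathbb{R}^{k \times s}$ and write its columns as $x_1,\ldots,x_s \in \mathbb{R}^k$, so that $VX = [Vx_1,\ldots,Vx_s]$ and $\Theta V X = [\Theta V x_1,\ldots,\Theta V x_s]$. Each column $Vx_j$ lies in $\operatorname{range}(V)$, which is exactly the vector subspace on which $\Theta$ is assumed to be an $\varepsilon$-subspace embedding in the sense of Definition~\ref{def:vec_embedding}.

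The key steps, in order, are as follows. First, apply Definition~\ref{def:vec_embedding} to each column, which gives
\[
(1-\varepsilon)\|Vx_j\|_2^2 \le \|\Theta V x_j\|_2^2 \le (1+\varepsilon)\|Vx_j\|_2^2, \qquad j=1,\ldots,s.
\]
Second, sum these $s$ inequalities over $j$. Because the constants $1\pm\varepsilon$ do not depend on $j$, the summation preserves both inequalities. Third, use the identities
\[
\|VX\|_F^2=\sum_{j=1}^s\|Vx_j\|_2^2 \quad\text{and}\quad \|\Theta VX\|_F^2=\sum_{j=1}^s\|\Theta Vx_j\|_2^2 .
\]
These follow from $\operatorname{Trace}(Y^TY)=\sum_j y_j^Ty_j$, and they turn the summed inequality into exactly \eqref{eq3.3x}. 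Since $X$ was arbitrary, $\Theta$ is an $\varepsilon$-subspace embedding for $\mathcal{U}$ in the sense of Definition~\ref{def:matrix_embedding}.

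No step here is a genuine obstacle. The only point needing care is to observe that the embedding constant $\varepsilon$ is uniform over all of $\operatorname{range}(V)$, rather than holding only for some fixed vector. This uniformity is what allows one $\Theta$ to control all $s$ columns at once without any loss in $\varepsilon$. In particular, no union bound or probabilistic argument is needed, because the hypothesis is already deterministic on the whole subspace.
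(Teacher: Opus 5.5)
Your proposal is correct and matches the paper's proof essentially line by line: you apply Definition~\ref{def:vec_embedding} to each column $Vx_j \in \operatorname{range}(V)$, sum the $s$ inequalities, and use $\|M\|_F^2=\sum_j\|m_j\|_2^2$ to obtain \eqref{eq3.3x}. You also note that the uniformity of $\varepsilon$ over all of $\operatorname{range}(V)$ is what makes the summation lossless, which is correct and which the paper leaves implicit.
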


\begin{proof}
Let $X = [x_1, x_2, \dots, x_s] \in \mathbb{R}^{k \times s}$ be arbitrary. Since $V x_j \in \operatorname{range}(V)$ for each column $j = 1, \dots, s$, Definition~\ref{def:vec_embedding} implies that
\begin{equation*}
(1 - \varepsilon) \|V x_j\|_2^2 \le \|\Theta V x_j\|_2^2 \le (1 + \varepsilon) \|V x_j\|_2^2, \quad j = 1, \dots, s.
\end{equation*}
Summing these inequalities over all $s$ columns yields
\begin{equation*}
(1 - \varepsilon) \sum_{j=1}^s \|V x_j\|_2^2 \le \sum_{j=1}^s \|\Theta V x_j\|_2^2 \le (1 + \varepsilon) \sum_{j=1}^s \|V x_j\|_2^2.
\end{equation*}
Using the identity $\|M\|_F^2 = \sum_{j=1}^s \|m_j\|_2^2$ for any matrix $M = [m_1, \dots, m_s]$, we obtain
\begin{equation*}
(1 - \varepsilon) \|V X\|_F^2 \le \|\Theta V X\|_F^2 \le (1 + \varepsilon) \|V X\|_F^2.
\end{equation*}
Since $X$ is arbitrary, $\Theta$ is an $\varepsilon$-subspace embedding for $\mathcal{U}$.
\end{proof}

\begin{proposition}[Singular-value bounds for matrix subspace embeddings]
If $\Theta \in \mathbb{R}^{\ell \times n}$ is an $\varepsilon$-subspace embedding for the matrix subspace $\mathcal{U}$, then for any matrix $X \in \mathbb{R}^{k \times s}$,
\begin{equation}
(1 + \varepsilon)^{-1/2} \sigma_{\min}(\Theta V X) \le \sigma_{\min}(V X) \le \sigma_{\max}(V X) \le (1 - \varepsilon)^{-1/2} \sigma_{\max}(\Theta V X),
\label{eq3.5}
\end{equation}
where $\sigma_{\min}(\cdot)$ and $\sigma_{\max}(\cdot)$ denote the minimal and maximal singular values of the matrix, respectively.
\end{proposition}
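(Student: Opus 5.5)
The plan is to reduce the statement to the variational characterization of the extreme singular values, $\sigma_{\max}(M)=\max_{\|y\|_2=1}\|My\|_2$ and $\sigma_{\min}(M)=\min_{\|y\|_2=1}\|My\|_2$ for $M$ with $s$ columns, $y\in\mathbb{R}^s$. We apply this to $M=VX\in\mathbb{R}^{n\times s}$ and $M=\Theta VX\in\mathbb{R}^{\ell\times s}$. Throughout, $\sigma_{\min}$ means the $s$-th singular value, i.e.\ the minimum of $\|My\|_2$ over unit $y$. With this convention the bounds hold even in degenerate cases, for instance when $\ell<s$ and $\sigma_{\min}(\Theta VX)=0$.

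The key step is a pointwise comparison $(1-\varepsilon)\|VXy\|_2^2\le\|\Theta VXy\|_2^2\le(1+\varepsilon)\|VXy\|_2^2$ for every $y\in\mathbb{R}^s$. Definition~\ref{def:matrix_embedding} only speaks about $k\times s$ coefficient matrices, so we apply it to the padded matrix $X'=[Xy,0,\dots,0]\in\mathbb{R}^{k\times s}$. Then $\|VX'\|_F=\|VXy\|_2$ and $\|\Theta VX'\|_F=\|\Theta VXy\|_2$, so \eqref{eq3.3x} yields exactly the vector inequality above. Alternatively, one can note that $VXy\in\operatorname{range}(V)$ and argue as in Proposition~\ref{prop:mat_from_vec} in reverse, but the padding trick uses only the matrix hypothesis as stated.

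Next we take extrema. For the right inequality, we pick a unit $y$ attaining $\sigma_{\max}(VX)$. The lower half of the comparison gives $\sigma_{\max}(VX)^2=\|VXy\|_2^2\le(1-\varepsilon)^{-1}\|\Theta VXy\|_2^2\le(1-\varepsilon)^{-1}\sigma_{\max}(\Theta VX)^2$. For the left inequality, we pick a unit $y_*$ attaining $\sigma_{\min}(VX)$. The upper half gives $\sigma_{\min}(\Theta VX)^2\le\|\Theta VXy_*\|_2^2\le(1+\varepsilon)\|VXy_*\|_2^2=(1+\varepsilon)\sigma_{\min}(VX)^2$. The middle inequality $\sigma_{\min}(VX)\le\sigma_{\max}(VX)$ is trivial, and taking square roots finishes the argument.

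The only delicate point is being careful about which extremizer is used in each direction. For the max bound the vector must be optimal for $VX$, not for $\Theta VX$, and likewise for the min bound. The second such point is the convention for $\sigma_{\min}$ of a non-square matrix. Both are handled by the choices above, so no genuine obstacle is expected.
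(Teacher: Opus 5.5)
Your proof is correct and follows the paper's route. Both apply the variational characterizations $\sigma_{\max}(M)=\max_{\|y\|_2=1}\|My\|_2$ and $\sigma_{\min}(M)=\min_{\|y\|_2=1}\|My\|_2$ to the pointwise two-sided comparison between $\|VXy\|_2$ and $\|\Theta VXy\|_2$. Your padding step $X'=[Xy,0,\dots,0]$ and your explicit convention for $\sigma_{\min}$ simply make rigorous a step the paper leaves implicit: the paper invokes the vector embedding on $\operatorname{range}(V)$, although the hypothesis is stated for the matrix subspace $\mathcal{U}$.
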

%%%%%%%%%%%%%%%%%
\begin{proof}
By the variational characterization (Courant--Fischer min-max theorem) of singular values, for any matrix $M \in \mathbb{R}^{n \times s}$, the maximal and minimal singular values are given by
\begin{equation*}
\sigma_{\max}(M) = \max_{\|u\|_2 = 1} \|M u\|_2 \quad \text{and} \quad \sigma_{\min}(M) = \min_{\|u\|_2 = 1} \|M u\|_2.
\end{equation*}
Let $u \in \mathbb{R}^s$ with $\|u\|_2 = 1$ be arbitrary, and define the vector $w = V X u \in \mathbb{R}^n$. Since the columns of $V$ span $\operatorname{range}(V)$, we have $w \in \operatorname{range}(V)$. 

By the $\varepsilon$-subspace embedding property of $\Theta$ on $\operatorname{range}(V)$, it follows that
\begin{equation*}
(1+\varepsilon)^{-1} \|\Theta w\|_2^2 \le \|w\|_2^2 \le (1-\varepsilon)^{-1} \|\Theta w\|_2^2,
\end{equation*}
which, after taking the square root, is equivalent to
\begin{equation*}
(1+\varepsilon)^{-1/2} \|\Theta V X u\|_2 \le \|V X u\|_2 \le (1-\varepsilon)^{-1/2} \|\Theta V X u\|_2.
\end{equation*}
Taking the maximum over all unit vectors $\|u\|_2 = 1$ in the right-hand inequality yields
\begin{equation*}
\sigma_{\max}(V X) = \max_{\|u\|_2 = 1} \|V X u\|_2 \le (1 - \varepsilon)^{-1/2} \max_{\|u\|_2 = 1} \|\Theta V X u\|_2 = (1 - \varepsilon)^{-1/2} \sigma_{\max}(\Theta V X).
\end{equation*}
Similarly, taking the minimum over all unit vectors $\|u\|_2 = 1$ in the left-hand inequality gives
\begin{equation*}
\sigma_{\min}(V X) = \min_{\|u\|_2 = 1} \|V X u\|_2 \ge (1 + \varepsilon)^{-1/2} \min_{\|u\|_2 = 1} \|\Theta V X u\|_2 = (1 + \varepsilon)^{-1/2} \sigma_{\min}(\Theta V X).
\end{equation*}
Since $\sigma_{\min}(V X) \le \sigma_{\max}(V X)$ always holds, combining the two inequalities establishes \eqref{eq3.5}.
\end{proof}

%……%%%%%%%%%%%%%
\subsection{Randomized Global Arnoldi (RGl-Arnoldi) method }
\label{sub:rGMRES}
 At the $k$th iteration of the Gl-GMRES method, the solution to (\ref{MRHS}) can be approximated by:
\begin{equation}
\label{eq:GMRES_projectedproblem}
X_k = X_0 + {\mathcal{V}}_{k} \diamond {z}_k, \quad \mbox{where} \quad {z}_k = \operatorname*{argmin}_{z \in \mathbb{R}^k} \|\beta e^{(k+1)}_1-\bar{H}_k z \|_2.
\end{equation}
Note that the orthonormalization process can be costly for very large dimensional blocks.  
One remedy to mitigate the computational cost of the global modified Gram-Schmidt (Gl-Gram-Schmidt) process is to use the randomized Gl-Gram-Schmidt process that exploits sketching, as described in \cite{Balabanov2025}.

Before defining the RGl-Arnoldi, we will introduce some symbols and notations that will be used for this method.
\begin{definition}
Let $\mathcal{X}$ and $\mathcal{Y}$ be two matrices of dimension $n\times s$, we consider the  sketched inner product
\begin{eqnarray}
\label{SP}
<\mathcal{X},\mathcal{Y}>_{\Theta}=\mathrm{Trace}(\mathcal{X}^{T}\Theta^{T}\Theta\mathcal{Y}).
\end{eqnarray}
\end{definition}
The RGl-Arnoldi approach is obtained by applying the randomized Gl-Gram-Schmidt algorithm to the global Krylov subspace $\mathcal{K}_k({A},{R}_0)$. More precisely, after $k$ iterations of RGl-Arnoldi process, we have a basis $\{ Q_1,\dots,{Q}_{k+1}\}$ of $\mathcal{K}_{k+1}({A}, {R}_0)$ 
with blocks ${Q}_i$, $i=1,\dots,k+1$, that are orthonormal with respect to the sketched inner product (\ref{SP}) , and an upper Hessenberg matrix $\bar{E}_k \in \mathbb{R}^{(k+1) \times k}$. Let %\linebreak[4]
$\mathcal{Q}_{k}=[{Q}_1,\dots,{Q}_{k}]\in\mathbb{R}^{n\times ks}$ and $\mathcal{Q}_{k+1}=
[\mathcal{Q}_k,{Q}_{k+1}]\in\mathbb{R}^{n\times (k+1)s}$, then we get a RGl-Arnoldi process identity analogous to Gl-Arnoldi method (see Section \ref{Gl-Arnoli})

\begin{equation}
\label{eq:rArnoldi} {A}  \mathcal{Q}_{k} = \mathcal{Q}_{k+1}\diamond \bar{E}_k.
\end{equation}
Algorithm \ref{algo1x} summarizes the main steps involved in the new RGl-Arnoldi process:
\begin{algorithm}[H]
\caption{The RGl-Arnoldi process}
\begin{algorithmic}[1]
\STATE  Initialize
$\tilde{Q}_1 = R_0$.
\STATE Sketch $\tilde{S}_{1}=\Theta R_{0}$.
\STATE Compute the sketched (semi) norm $\beta=\|\tilde{S}_{1}\|_{F}$.
\STATE Scale blocks
$S_{1}=\tilde{S}_{1}/\beta$ and $Q_{1}=\tilde{Q}_{1}/\beta$.
\FOR{$j = 1,2,\ldots,k$}
  \STATE $W := AQ_j$
   \STATE $Z :=\Theta  W$
  \FOR{$i = 1,2,\ldots,j$}
    \STATE $E_{i,j} = \langle W, Q_i \rangle_{\Theta}=\langle Z, S_i \rangle_{F}$
    \STATE $W = W - E_{i,j} Q_i$
      \STATE $Z = Z - E_{i,j} S_i$
  \ENDFOR
  \STATE $E_{j+1,j} = \|Z\|_F$
  \STATE $Q_{j+1} =W/E_{j+1,j}$ and  $S_{j+1} =Z/E_{j+1,j}$.
\ENDFOR
\end{algorithmic}
\label{algo1x}
\end{algorithm}

 Building on the RGl-Arnoldi process, the RGl-GMRES solution at the $k$th iteration is given by:
\begin{equation}
\label{eq:rGMRES_projectedproblem}
X_k^{r} = X^{r}_0 + \mathcal{Q}_k\diamond  z_k^{r}\,
\quad where \quad {z}_k^{r}= \text{argmin}_{z \in \mathbb{R}^k}  \|\bar{E}_k {z} - \tilde{\beta} {e}_1 \|_2\quad\mbox{and}\quad \tilde{\beta}=\|{\Theta}{R}_0\|_F.
\end{equation}
The $k$th RGl-GMRES approximation $X_k^{r}$  minimizes the sketched (semi)norm of the residual $R^{r}_k$ over $\calK_k(A,R_0)$, since
\begin{equation}
\label{eq:GMRES_sketchedprojected}
\begin{split}
\|\Theta ({A}{X}_k^{r}- {B})\|_{F} &=\min_{z \in \mathbb{R}^k}  \|{\Theta} ({A}  \mathcal{Q}_k \diamond{z} -  R_0) \|_F\\
& = \min_{z \in \mathbb{R}^k}  \|{\Theta}\mathcal{Q}_{k+1} \diamond( {z} - \tilde{\beta} {e}_1 ) \|_{F}\\
&=\min_{z \in \mathbb{R}^k} \|\bar{E}_k{{z}} - \tilde{\beta} {e}_1 \|_2.
\end{split}
\end{equation}
For solving the problem \eqref{eq:GMRES_sketchedprojected}, the matrix \(\bar{E}_k\) is factored as \(\bar{Q}_k \bar{E}_k = \bar{R}_k\), where \(\bar{Q}_k\) is a product of Givens rotations and \(\bar{R}_k\) is upper triangular. In practice, the QR-factorization is implemented at each step by performing this factorization on the new column of \(\bar{E}_k\); the details of this implementation are given in \cite{Saad1986}. Similarly, this leads to considerable savings compared to the randomized Bl-GMRES method where the Hessenberg matrix $E_k$ is of dimension $ks\times ks$
or to the simple randomized GMRES algorithm where $s$ least-squares problems have to be solved at each
iteration. 

Based on the above analysis, we now present the RGl-GMRES algorithm for large-scale linear systems with multiple right-hand sides \eqref{MRHS}:
\begin{algorithm}[t]
%\caption{ The randomized global GMRES (rgGMRES) algorithm}
\begin{algorithmic}[1]
 \STATE\textbf{Require: } 
\[
A \in \mathbb{R}^{n \times n}, \quad
B \in \mathbb{R}^{n\times s}, \quad
\Theta \in \mathbb{R}^{\ell \times n}, 
\quad \ell \ll n
\]
\STATE  Initialize
$\tilde{V}_1 = R_0$.
\STATE Sketch $\tilde{S}_{1}=\Theta R_{0}$.
\STATE Compute the sketched (semi) norm $\tilde{\beta}=\|\tilde{S}_{1}\|_F$.
\STATE Scale blocks
$V_{1}=\tilde{V}_{1}/\tilde{\beta}$ and $S_{1}=\tilde{S}_{1}/\tilde{\beta}$.
\FOR{$j = 1,2,\ldots,k$}
  \STATE $W := A V_j$.
  \STATE Sketch $Z=\Theta W$.
  \FOR{$i = 1,2,\ldots,j$}
    \STATE $h_{i,j} = \langle W, V_i \rangle_{\Theta}=\langle Z, S_i \rangle_{F}$.
    \STATE $W = W - h_{i,j} V_i$.
    \STATE $Z = Z - h_{i,j} S_i$.
  \ENDFOR
 \STATE Compute the sketched (semi)norm 
   $h_{j+1,j} = \|Z\|_F$.
  \STATE Scale blocks:  $V_{j+1} =W/h_{j+1,j}$ and  $S_{j+1} =Z/h_{j+1,j}$.
\ENDFOR
\STATE Solve the sketched least square  problem:
\[
\min_{z\in\mathbb{R}^{k}} \|\bar{H}_kz - \tilde{\beta}e_1\|_2,
\]
\STATE Set
\[
X_k = X_0 + V_k \diamond z,
\qquad
R_k = B - A X_k.
\]
\caption{The RGl-GMRES algorithm}
\label{alg:rGMRES}
\end{algorithmic}
\end{algorithm}

Similar to the Gl-GMRES method, the work and storage of our RGl-GMRES method increase with the iteration step. This may cause
difficulties for large steps. A frequently-used way is to restart our RGl-GMRES algorithm every $m$ iterations
many times as necessary for convergence. In practice, this is usually done for the GMRES algorithm \cite{Saad1986}. 

Next, we can present some propositions about the proposed RGl-GMRES method.

\begin{definition}
\label{def4x}
This optimality property of RGl-GMRES can be also derived from more general facts about projection methods 
%In the general framework for iterative projection methods for \eqref{eq:LS} 
(as described, for instance, in \cite{Saad1986}). Indeed, 
% GMRES is such that $\bfx_k\in\calK_k(\bfA,\bfb)$ and $\bfr_k\perp\bfA\calK_k(\bfA,\bfb)$. Similarly, 
the $k$-th RGl-GMRES approximation is such that 
\[
 X_{k}^{r}\in\calK_k(A, R_0)\quad\mbox{and}\quad  R^{r}_k\perp_{<\Theta,\Theta>_{F}}A\calK_k(A,R_0),
\]
where, by $\perp_{{\Theta}}$, we mean orthogonal with respect to the sketched inner product $\langle\Theta\cdot,\Theta\cdot\rangle_{F}$.
\end{definition}
Note that the above properties generalize to those used by standard Gl-GMRES, which would hold with $\Theta=I_n$. 
\begin{proposition}
\label{pro5}
Assume that $\Theta$ is an $(\epsilon,\delta,K+1)$-oblivious embedding, then we have  the following bound for the RGl-GMRES residual $B-AX_k$ norm with respect to the Gl-GMRES residual $ B-AX_k$ norm at the $k$th iteration
    \[
\| B-A {X}_k^{r}\|^2_{F}\leq \frac{(1+\epsilon)}{(1-\epsilon)}\| B-A{ X}_k\|^2_{F}.
\]
\end{proposition}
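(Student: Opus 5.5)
The argument compares the two optimization problems that define the iterates and uses the embedding property on a suitable finite-dimensional subspace. Both iterates lie in the same affine space. The Gl-GMRES iterate is $X_k = X_0 + \mathcal{V}_k\diamond z_k$ with $X_k - X_0 \in \calK_k(A,R_0)$, and by Definition~\ref{def4x} the RGl-GMRES iterate satisfies $X_k^{r} - X_0 \in \calK_k(A,R_0)$. We take $X_0^r = X_0$, so that both start from the same $R_0$. Hence every admissible residual has the form $R_0 - AY$ with $Y\in\calK_k(A,R_0)$, and all such residuals lie in the matrix subspace
\[
\mathcal{U}_{k+1}=\bigl\{R_0 - AY : Y\in\calK_k(A,R_0)\bigr\}\subseteq \calK_{k+1}(A,R_0).
\]
By Remark~\ref{Kry}, this is contained in $\{W C : C\in\mathbb{R}^{(k+1)s\times s}\}$ with $W=[R_0,AR_0,\dots,A^kR_0]$. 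That is, each column of such a residual lies in the vector subspace $\operatorname{range}(W)\subset\mathbb{R}^n$, of dimension at most $(k+1)s$.

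\textbf{Step 1: the embedding inequality on the residual space.} We interpret the hypothesis that $\Theta$ is an $(\epsilon,\delta,K+1)$-oblivious embedding as saying that, with probability at least $1-\delta$, $\Theta$ is an $\epsilon$-subspace embedding for $\operatorname{range}(W)$; this is the block Krylov space of dimension $\le (k+1)s$ for $k\le K$. Proposition~\ref{prop:mat_from_vec} then upgrades this to the matrix version:
\[
(1-\epsilon)\|M\|_F^2\le \|\Theta M\|_F^2\le (1+\epsilon)\|M\|_F^2 \quad\text{for all } M \text{ whose columns lie in } \operatorname{range}(W),
\]
and in particular for every $M\in\mathcal{U}_{k+1}$. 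This is the main obstacle. One must be explicit that the embedding is required for the whole column space generated by $R_0,\dots,A^kR_0$, not merely for the $(k+1)$-dimensional span of the blocks, since the matrix embedding in Definition~\ref{def:matrix_embedding} is defined through the column space. I would state this reading of ``$(\epsilon,\delta,K+1)$-oblivious'' explicitly; the resulting bound then holds with probability at least $1-\delta$.

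\textbf{Step 2: chain of inequalities.} Let $R_k^{r}=B-AX_k^{r}$ and $R_k=B-AX_k$, both in $\mathcal{U}_{k+1}$. Applying the lower embedding bound, then the optimality of $X_k^{r}$ in the sketched norm, \eqref{eq:GMRES_sketchedprojected} (where $X_k$ is an admissible competitor), then the upper embedding bound gives
\[
(1-\epsilon)\|R_k^{r}\|_F^2\le \|\Theta R_k^{r}\|_F^2 = \min_{Y\in\calK_k(A,R_0)}\|\Theta(R_0-AY)\|_F^2\le \|\Theta R_k\|_F^2\le (1+\epsilon)\|R_k\|_F^2 .
\]
Dividing by $1-\epsilon>0$ yields the claimed bound. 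As a remark, one could combine this with the minimality \eqref{eq:gmres_min} of $\|R_k\|_F$ to conclude that RGl-GMRES is quasi-optimal over $\calK_k(A,R_0)$.
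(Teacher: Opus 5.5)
Your Step 2 is the paper's proof: the lower embedding bound applied to $R_k^{r}$, then sketched optimality of $X_k^{r}$ with $X_k$ as an admissible competitor, then the upper embedding bound applied to $R_k$. Noting that the bound holds only with probability at least $1-\delta$ is a useful addition that the paper's statement omits.

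The problem is Step 1. The paper's proof invokes Definition~\ref{def:matrix_embedding} without saying which subspace is embedded, so you are right to pin it down. However, your claim that the embedding is \emph{required} on the whole column space $\operatorname{range}(W)$ is false. It also makes the proposition vacuous in the regime the method targets. If $\dim\operatorname{range}(W)>\ell$, then $\Theta$ annihilates some nonzero $x\in\operatorname{range}(W)$. Then $\|\Theta x\|_2=0<(1-\epsilon)\|x\|_2$, so no $\epsilon$-embedding of $\operatorname{range}(W)$ exists. Since $\operatorname{range}(W)\supseteq\operatorname{range}(R_0)$, which generically has dimension $s$, your hypothesis fails for every $k$ whenever $\ell<s$ (e.g.\ $\ell=20$, $s=50$ in the experiments). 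Generically it forces $\ell\ge(k+1)s$.

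Your chain only uses the two-sided inequality $(1-\epsilon)\|M\|_F^2\le\|\Theta M\|_F^2\le(1+\epsilon)\|M\|_F^2$ for $M=R_k^{r}$ and $M=R_k$. Both lie in the at most $(k+1)$-dimensional linear space $\mathcal{K}_{k+1}(A,R_0)$; your $\mathcal{U}_{k+1}$ is an affine subset of it. So the natural reading of the ``$K+1$'' hypothesis is a Frobenius-norm $\epsilon$-embedding of $\mathcal{K}_{K+1}(A,R_0)$. Equivalently, $I_s\otimes\Theta$ is an $\epsilon$-embedding of the $(K+1)$-dimensional subspace $\operatorname{vec}(\mathcal{K}_{K+1}(A,R_0))\subset\mathbb{R}^{ns}$. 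This is attainable with $\ell$ independent of $s$: for Gaussian $\Theta$, $\|\Theta X\|_F^2$ concentrates at least as well as $\|\Theta x\|_2^2$ does for a single vector. Replace Step 1 by this assumption and drop the detour through Proposition~\ref{prop:mat_from_vec}; Step 2 then goes through verbatim.
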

\begin{proof} The $k$-th Gl-GMRES iterate $X_k$ is defined as:
\begin{eqnarray} 
\label{opt1}
 X_k
\;=\;
\arg\min_{X \in X_0 + \mathcal{K}_k(A,R_0)}
\| B - A X \|_F .
\end{eqnarray}
That is, Gl-GMRES method  minimizes the true residual norm over the affine Krylov subspace.
The $k$-th RGl-GMRES iterate $X_k^{r}$ is defined as :
\begin{eqnarray}
\label{opt2}
X_k^{r}
\;=\;
\arg\min_{X \in X_0 + \mathcal{K}_k(A,R_0)}
\| \Theta ( B - A X ) \|_F .
\end{eqnarray}
That is, the RGl-GMRES minimizes the sketched residual norm over the same affine Krylov subspace. Since $\Theta$ is an $(\varepsilon,\delta,k+1)$-subspace embedding and by using Definition \ref{def:matrix_embedding}, we get
\begin{eqnarray*}
\label{eq2}
(1-\epsilon)\| B-AX_k^{r}\|^{2}_F\leq
\|\Theta(B-A X_k^{r})\|^2_{F},
\end{eqnarray*}
using the RGl-GMRES optimality (\ref{opt2}) and inequality \eqref{eq3.3x}
\begin{eqnarray*}
\label{eq2x}
\|\Theta({B}-A X_k^{r})\|^2_{F}
\leq
\|\Theta({B}-A X_k)\|^2_{F}\leq
(1+\epsilon)\| B-A {X}_k\|^2_{F}.
\end{eqnarray*}
This completes the proof.
\end{proof}

Under the same assumptions on the coefficient matrix $A$ as in Section~\ref{sec2.2}, we now present the main convergence theorem for our proposed RGl-GMRES method.

\begin{theorem}
\label{thm:rgl_convergence}
Let $A = Z \Lambda Z^{-1}$ be diagonalizable, and let $R_k^r = B - AX_k^r$ be the $k$-th residual obtained by the RGl-GMRES method applied to~\eqref{MRHS}. Let $\Gamma_\epsilon = \frac{1+\epsilon}{1-\epsilon}$. 
Then the residual $R_k^r$ satisfies:
\begin{equation}
\label{eq:rgl_bound_gamma}
\frac{\|R_k^r\|_F^2}{\|\beta\|_F^2} \le \Gamma_\epsilon \frac{\|Z\|_2^2}{e_1^T (\overline{V}_{k+1}^H D_\gamma \overline{V}_{k+1})^{-1} e_1},
\end{equation}
and consequently,
\begin{equation}
\label{eq:rgl_minmax_bound}
\frac{\|R_k^r\|_F}{\|R_0\|_F} \le \sqrt{\Gamma_\epsilon} \, \kappa_2(Z) \min_{p \in \mathcal{P}_k, p(0)=1} \max_{\lambda \in \operatorname{Sp}(A)} |p(\lambda)|,
\end{equation}
where $D_\gamma$, $\overline{V}_{k+1}$, and $\mathcal{P}_k$ are as defined in Section~\ref{sec2.2}.
\end{theorem}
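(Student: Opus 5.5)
The plan is to combine Proposition~\ref{pro5} with Theorem~\ref{thm:gl_convergence}. Both residuals are compared with the same initial residual $R_0$, since both methods start from $X_0$ and search over $X_0+\mathcal{K}_k(A,R_0)$. We also assume, as in Proposition~\ref{pro5}, that $\Theta$ is an $\epsilon$-subspace embedding in the sense of Definition~\ref{def:matrix_embedding}. The relevant subspace is spanned by $\mathcal{K}_{k+1}(A,R_0)$, which contains $R_0 - AZ$ for every $Z\in\mathcal{K}_k(A,R_0)$. This is where the hypothesis ``$(\epsilon,\delta,k+1)$-oblivious embedding'' enters. In the probabilistic reading, every inequality below holds with probability at least $1-\delta$.

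\textbf{Step 1 (bound \eqref{eq:rgl_bound_gamma}).} Let $X_k$ be the Gl-GMRES iterate and $R_k=B-AX_k$ its residual. Proposition~\ref{pro5} gives
\[
\|R_k^r\|_F^2 \le \Gamma_\epsilon \|R_k\|_F^2 .
\]
Dividing by $\|\beta\|_F^2$ and applying \eqref{eq:gl_bound_gamma} to the right-hand side yields \eqref{eq:rgl_bound_gamma} at once. The factor $\Gamma_\epsilon$ is independent of $k$ and of the spectral data, so it passes through unchanged.

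\textbf{Step 2 (bound \eqref{eq:rgl_minmax_bound}).} Take square roots in Proposition~\ref{pro5} to get
\[
\|R_k^r\|_F \le \sqrt{\Gamma_\epsilon}\,\|R_k\|_F .
\]
Dividing by $\|R_0\|_F$ and invoking \eqref{eq:gl_minmax_bound} gives the result. For a self-contained route one can instead argue directly. For any $p\in\mathcal{P}_k$ the matrix $p(A)R_0$ lies in $R_0 - A\mathcal{K}_k(A,R_0)$. Then
\[
\|p(A)R_0\|_F = \|Z p(\Lambda)\beta\|_F \le \|Z\|_2 \max_{\lambda\in\operatorname{Sp}(A)} |p(\lambda)| \, \|Z^{-1}\|_2 \|R_0\|_F .
\]
Combining this with the sketched optimality \eqref{opt2} and the two-sided embedding inequality, exactly as in the proof of Proposition~\ref{pro5}, reproduces the same constant $\sqrt{\Gamma_\epsilon}\,\kappa_2(Z)$.

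\textbf{Main obstacle.} The only real subtlety is making sure the embedding property applies simultaneously to $R_k^r$ and to the comparison residual ($R_k$, or $p(A)R_0$). This requires one fixed subspace, $\mathcal{K}_{k+1}(A,R_0)$ viewed through Proposition~\ref{prop:mat_from_vec} with $V$ a basis of the column space $\operatorname{span}\{R_0,AR_0,\dots,A^kR_0\}\subset\mathbb{R}^n$, that contains every residual of the form $R_0-AZ$ with $Z\in\mathcal{K}_k(A,R_0)$. Once that containment is noted, the rest is a direct chaining of inequalities. I would state explicitly that the bounds hold on the event where $\Theta$ embeds this subspace.
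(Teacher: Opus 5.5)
Your proof is correct and is essentially the paper's. Step~1 chains Proposition~\ref{pro5} with \eqref{eq:gl_bound_gamma} exactly as the paper does, and for \eqref{eq:rgl_minmax_bound} you combine $\|R_k^r\|_F\le\sqrt{\Gamma_\epsilon}\,\|R_k\|_F$ directly with \eqref{eq:gl_minmax_bound} (or use your equally valid $p(A)R_0$ argument), whereas the paper re-derives that step through $\|\beta\|_F\le\|Z^{-1}\|_2\|R_0\|_F$ and the Krylov--Vandermonde/min-max identity.

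One correction to your embedding remark: going through Proposition~\ref{prop:mat_from_vec} requires $\Theta$ to embed the column space of $[R_0,AR_0,\dots,A^kR_0]$, which has dimension up to $(k+1)s$. That is impossible whenever $\ell$ is below that dimension (e.g.\ $\ell=20$, $s=50$), and an $(\epsilon,\delta,k+1)$-oblivious embedding does not provide it. Your probability-$(1-\delta)$ claim should instead rest on $(1-\epsilon)\|M\|_F^2\le\|\Theta M\|_F^2\le(1+\epsilon)\|M\|_F^2$ holding for all $M$ in the $(k+1)$-dimensional matrix subspace $\mathcal{K}_{k+1}(A,R_0)$ itself, i.e.\ on $I_s\otimes\Theta$ embedding $\operatorname{vec}(\mathcal{K}_{k+1}(A,R_0))$.
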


\begin{proof}
By Proposition~\ref{pro5} and Theorem~\ref{thm:gl_convergence}, the residual $R_k^r$ of RGl-GMRES and the residual $R_k$ of standard Gl-GMRES satisfy
\begin{equation}
\label{eq:proof_step1}
\|R_k^r\|_F^2 \le \Gamma_\epsilon \|R_k\|_F^2 \le \Gamma_\epsilon \frac{\|\beta\|_F^2 \|Z\|_2^2}{e_1^T (\overline{V}_{k+1}^H D_\gamma \overline{V}_{k+1})^{-1} e_1},
\end{equation}
which immediately establishes~\eqref{eq:rgl_bound_gamma}.

To derive the min-max bound~\eqref{eq:rgl_minmax_bound}, we note that $R_0 = Z\beta$ implies $\beta = Z^{-1}R_0$, and hence $\|\beta\|_F \le \|Z^{-1}\|_2 \|R_0\|_F$. Substituting this relation into~\eqref{eq:proof_step1} yields
\begin{equation}
\frac{\|R_k^r\|_F^2}{\|R_0\|_F^2} \le \Gamma_\epsilon \frac{\|Z\|_2^2 \|Z^{-1}\|_2^2}{e_1^T (\overline{V}_{k+1}^H D_\gamma \overline{V}_{k+1})^{-1} e_1} = \Gamma_\epsilon \frac{\kappa_2(Z)^2}{e_1^T (\overline{V}_{k+1}^H D_\gamma \overline{V}_{k+1})^{-1} e_1}.
\label{eq:rel_res_mid}
\end{equation}
Next, invoking the equivalence between the Krylov--Vandermonde quadratic form maximization and the discrete min-max polynomial approximation (see \cite{bellalij2008analysis}):
% \begin{equation*}
% \frac{1}{e_1^T (\overline{V}_{k+1}^H D_\gamma \overline{V}_{k+1})^{-1} e_1} \le \max_{\gamma \in \widetilde{\Delta}_n} \frac{1}{e_1^T (\overline{V}_{k+1}^H D_\gamma \overline{V}_{k+1})^{-1} e_1} = \left( \min_{p \in \mathcal{P}_k, p(0)=1} \max_{\lambda \in \operatorname{Sp}(A)} |p(\lambda)| \right)^2.
% \end{equation*}
\begin{equation*}
\frac{1}{e_1^T (\overline{V}_{k+1}^H D_\gamma \overline{V}_{k+1})^{-1} e_1} 
\le \max_{\substack{\gamma \ge 0 \\ \sum_{i=1}^n \gamma_i = 1}} \frac{1}{e_1^T (\overline{V}_{k+1}^H D_\gamma \overline{V}_{k+1})^{-1} e_1} 
= \left( \min_{p \in \mathcal{P}_k, p(0)=1} \max_{\lambda \in \operatorname{Sp}(A)} |p(\lambda)| \right)^2.
\end{equation*}
Substituting this into~(\ref{eq:rel_res_mid}) and taking the square root on both sides completes the proof.
\end{proof}
%%%%%%%%%
\begin{remark}
\label{rem:convergence_insights}
Several important observations follow from Theorem~\ref{thm:rgl_convergence} in comparison with Theorem~\ref{thm:gl_convergence}:
\begin{enumerate}[1)]
    \item \textbf{Asymptotic Consistency:} In the limit as $\epsilon \to 0$ (exact inner products), we have $\Gamma_\epsilon = \frac{1+\epsilon}{1-\epsilon} \to 1$. Thus, the upper bounds~\eqref{eq:rgl_bound_gamma} and~\eqref{eq:rgl_minmax_bound} reduce exactly to the deterministic Gl-GMRES results in Theorem~\ref{thm:gl_convergence}.
    \item \textbf{Negligible Impact on Convergence Rate:} The presence of random sketching only introduces a moderate constant inflation factor $\sqrt{\Gamma_\epsilon}$ to the residual bound, while the asymptotic convergence rate remains entirely governed by $\kappa_2(Z)$ and the spectrum $\operatorname{Sp}(A)$. For instance, with $\epsilon = 0.1$, $\sqrt{\Gamma_\epsilon} \approx 1.105$, which explains why RGl-GMRES requires essentially the same number of iterations as Gl-GMRES in practice.
    \item \textbf{Special Cases:} When $A$ is normal, $\kappa_2(Z) = 1$. Furthermore, for $s = 1$ and $\epsilon \to 0$, the bound~\eqref{eq:rgl_minmax_bound} recovers the classic single-system GMRES bound \cite{Saad1986}.
\end{enumerate}
\end{remark}

\subsection{The equivalence between RGl-GMRES and classical RGMRES Methods}
System~(\ref{MRHS}) can be equivalently rewritten as the following $n s \times n s$ block-diagonal linear system:
\begin{equation}
\label{eq:vec_sys}
\mathcal{A} \mathcal{X} = \mathcal{B},
\end{equation}
where $\mathcal{A} = I_s \otimes A \in \mathbb{R}^{ns \times ns}$, $\mathcal{X} = \operatorname{vec}(X) \in \mathbb{R}^{ns}$, and $\mathcal{B} = \operatorname{vec}(B) \in \mathbb{R}^{ns}$.

To establish the connection between RGl-GMRES applied to~\eqref{MRHS} and classical randomized GMRES (RGMRES) applied to~(\ref{eq:vec_sys}), we define the structured sketching matrix associated with $\Theta \in \mathbb{R}^{\ell \times n}$ as
\begin{equation}
\mathbf{\Theta} = I_s \otimes \Theta \in \mathbb{R}^{\ell s \times ns}.
\end{equation}
For any $X, Y \in \mathbb{R}^{n \times s}$, using the identity $\operatorname{vec}(\Theta X) = (I_s \otimes \Theta)\operatorname{vec}(X) = \mathbf{\Theta} \operatorname{vec}(X)$, the sketched Frobenius inner product satisfies
\begin{equation*}
\begin{split}
\langle X, Y \rangle_\Theta & = \operatorname{Trace}(X^T \Theta^T \Theta Y) \\
%& = \operatorname{vec}(\Theta X)^T \operatorname{vec}(\Theta Y) \\
& = \langle \mathbf{\Theta}\operatorname{vec}(X), \mathbf{\Theta}\operatorname{vec}(Y) \rangle_2 \\
& = \langle \operatorname{vec}(X), \operatorname{vec}(Y) \rangle_{\mathbf{\Theta}}.
\end{split}
\end{equation*}
Based on this relation, we have the following equivalence theorem.
\begin{theorem}
\label{thm:equivalence}
Let $X_0 = [x_0^{(1)}, x_0^{(2)}, \dots, x_0^{(s)}] \in \mathbb{R}^{n \times s}$ be an initial approximation for the multiple right-hand-side system~(1.1), and let $\mathcal{X}_0 = \operatorname{vec}(X_0) \in \mathbb{R}^{ns}$ be the initial approximation for the vectorized linear system~(\ref{eq:vec_sys}). 
Let $\Theta \in \mathbb{R}^{\ell \times n}$ be the sketching matrix used in RGl-GMRES, and suppose the classical RGMRES applied to~(\ref{eq:vec_sys}) employs the structured sketching matrix $\mathbf{\Theta} = I_s \otimes \Theta \in \mathbb{R}^{\ell s \times ns}$. 

Then, at each step $k$, the iterate $X_k^r$ generated by RGl-GMRES and the iterate $\mathcal{X}_k^r$ generated by RGMRES satisfy
\begin{equation}
\operatorname{vec}(X_k^r) = \mathcal{X}_k^r.
\end{equation}
\end{theorem}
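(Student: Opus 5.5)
We argue by induction on the Arnoldi step, showing that the randomized Arnoldi processes underlying the two methods generate identical coefficients and vectorized bases. The tool is the identity $\langle X,Y\rangle_\Theta=\langle \operatorname{vec}(X),\operatorname{vec}(Y)\rangle_{\mathbf{\Theta}}$ established just above. We also use the companion identity $\operatorname{vec}(AX)=(I_s\otimes A)\operatorname{vec}(X)=\mathcal{A}\operatorname{vec}(X)$, together with linearity of $\operatorname{vec}$.

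We begin with the initial quantities. Since $\operatorname{vec}(B-AX_0)=\mathcal{B}-\mathcal{A}\mathcal{X}_0$, the initial residual of RGMRES is $r_0=\operatorname{vec}(R_0)$. Its sketch satisfies $\mathbf{\Theta}r_0=\operatorname{vec}(\Theta R_0)$, hence $\|\mathbf{\Theta}r_0\|_2=\|\Theta R_0\|_F=\tilde\beta$. Consequently $v_1=\operatorname{vec}(V_1)$ and the sketched vector is $s_1=\operatorname{vec}(S_1)$.

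For the induction, assume that after step $j-1$ we have $v_i=\operatorname{vec}(V_i)$ and $s_i=\operatorname{vec}(S_i)$ for $i\le j$, and that the Hessenberg entries computed so far coincide. In step $j$ of Algorithm~\ref{alg:rGMRES}, we have $w=\mathcal{A}v_j=\operatorname{vec}(AV_j)$ and $z=\mathbf{\Theta}w=\operatorname{vec}(\Theta AV_j)$. Inside the inner loop, each coefficient is
\[
h_{i,j}^{\mathrm{vec}}=\langle z,s_i\rangle_2=\langle Z,S_i\rangle_F=h_{i,j},
\]
by the identity above, where the current $w,z$ are the vectorizations of the current $W,Z$. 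Because the updates $W\leftarrow W-h_{i,j}V_i$ and $Z\leftarrow Z-h_{i,j}S_i$ are linear, $\operatorname{vec}$ commutes with them. Finally, $\|z\|_2=\|Z\|_F$ gives $h_{j+1,j}$ equal in both processes, and after scaling $v_{j+1}=\operatorname{vec}(V_{j+1})$ and $s_{j+1}=\operatorname{vec}(S_{j+1})$. This completes the induction, so the two processes produce the same $\bar H_k$ and the same $\tilde\beta$.

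Since the two least-squares problems $\min_z\|\bar H_kz-\tilde\beta e_1\|_2$ are literally the same, they have the same minimizer $z_k^r$. It remains to show that the diamond-product update vectorizes correctly:
\[
\operatorname{vec}(\mathcal{Q}_k\diamond z)=\operatorname{vec}\Bigl(\sum_{i} z_iV_i\Bigr)=\sum_i z_i v_i=\mathcal{V}_k^{\mathrm{vec}}z.
\]
This gives $\operatorname{vec}(X_k^r)=\mathcal{X}_0+\mathcal{V}_k^{\mathrm{vec}}z_k^r=\mathcal{X}_k^r$.

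The main obstacle is interpretive rather than technical. One must fix that RGMRES uses the same Gram–Schmidt variant (modified, with sketch updates) and the same ordering, and that $\mathcal{Q}_k\diamond z$ is read as $\sum_i z_iQ_i$ (that is, $\mathcal{Q}_k(z\otimes I_s)$). If the least-squares minimizer is not unique, one must also assume a breakdown-free run with $\bar H_k$ of full column rank. If a different orthogonalization variant were used, the argument would instead proceed via the characterization in Definition~\ref{def4x}: both iterates minimize the same sketched residual norm over the same affine space, since $\operatorname{vec}(\mathcal{K}_k(A,R_0))=\mathcal{K}_k(\mathcal{A},r_0)$. Under a full-rank embedding this minimizer is unique, which again yields the equality.
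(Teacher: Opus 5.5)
Your proof is correct, but it takes a different route from the paper. The paper never tracks the Arnoldi process. It observes that $\operatorname{vec}(\mathcal{K}_k(A,R_0))=\mathcal{K}_k(\mathcal{A},\mathcal{R}_0)$, characterizes $X_k^r$ as the minimizer of $\|\Theta(B-AX)\|_F$ over $X_0+\mathcal{K}_k(A,R_0)$, and uses $\|\Theta M\|_F=\|\mathbf{\Theta}\operatorname{vec}(M)\|_2$ to transport this problem to the defining problem of RGMRES. It then concludes by uniqueness of the least-squares solution, which is essentially the fallback in your last paragraph.

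Your induction instead shows that the two randomized Arnoldi processes perform the same computation. They produce the same $\tilde\beta$ and the same $\bar H_k$, with $\operatorname{vec}(V_i)=v_i$ and $\operatorname{vec}(S_i)=s_i$ for every $i$, so the small projected problems literally coincide. What this buys:
\begin{itemize}
\item a stronger conclusion, since every intermediate quantity agrees, not just the iterates;
\item no appeal to the sketched minimization property \eqref{eq:GMRES_sketchedprojected} or to uniqueness of the minimizer over the Krylov space, only the requirement that both codes solve the identical small problem in the same way.
\end{itemize}
The price is that the argument is tied to one particular Gram--Schmidt variant, which you correctly flag.

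The paper's argument is shorter and implementation-agnostic: it covers any RGMRES variant that returns the exact sketched minimizer. In exchange it needs that minimizer to be unique, i.e.\ $\mathbf{\Theta}\mathcal{A}$ must be injective on $\mathcal{K}_k(\mathcal{A},\mathcal{R}_0)$, which holds, e.g., under the embedding property when no breakdown occurs. It also relies on the sketched Arnoldi relation to identify the algorithm's output with that minimizer.
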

%%%%%%%
\begin{proof}
First, observe that for any $j \ge 0$,
\begin{equation*}
\operatorname{vec}(A^j R_0) = (I_s \otimes A^j) \operatorname{vec}(R_0) = \mathcal{A}^j \mathcal{R}_0,
\end{equation*}
where $\mathcal{R}_0 = \operatorname{vec}(R_0) = \mathcal{B} - \mathcal{A}\mathcal{X}_0$. Hence, the vectorization of the global Krylov subspace $\mathcal{K}_k(A, R_0)$ coincides with the standard Krylov subspace $\mathcal{K}_k(\mathcal{A}, \mathcal{R}_0)$, i.e.,
\begin{equation*}
\operatorname{vec}(\mathcal{K}_k(A, R_0)) = \mathcal{K}_k(\mathcal{A}, \mathcal{R}_0).
\end{equation*}
Second, by definition, the RGl-GMRES iterate $X_k^r \in X_0 + \mathcal{K}_k(A, R_0)$ uniquely solves the sketched least-squares problem:
\begin{equation*}
X_k^r = \arg\min_{X \in X_0 + \mathcal{K}_k(A, R_0)} \|\Theta(B - AX)\|_F^2.
\end{equation*}
Using the vectorized identity $\|\Theta M\|_F^2 = \|\mathbf{\Theta}\operatorname{vec}(M)\|_2^2$ with $M = B - AX$, this optimization problem is equivalent to
\begin{equation*}
\operatorname{vec}(X_k^r) = \arg\min_{\mathcal{X} \in \mathcal{X}_0 + \mathcal{K}_k(\mathcal{A}, \mathcal{R}_0)} \|\mathbf{\Theta}(\mathcal{B} - \mathcal{A}\mathcal{X})\|_2^2,
\end{equation*}
which is precisely the defining minimization problem of the $k$-th iterate $\mathcal{X}_k^r$ of classical RGMRES applied to~(\ref{eq:vec_sys}) with sketching matrix $\mathbf{\Theta}$. By uniqueness of the least-squares solution, $\operatorname{vec}(X_k^r) = \mathcal{X}_k^r$.
\end{proof}
\section{Application to the Lyapunov Matrix Equation}
\label{sec4x}
In this section, we discuss how the proposed RGl-GMRES framework can be straightforwardly extended to solve the continuous-time Lyapunov matrix equation:
\begin{equation}
\label{eq:lyapunov}
AX + XA^T + BB^T = 0,
\end{equation}
where $A, X \in \mathbb{R}^{n \times n}$ and $B \in \mathbb{R}^{n \times s}$ with $s \ll n$. Lyapunov matrix equations play an important role in control theory, particularly in stability analysis and model order reduction. Equation~\eqref{eq:lyapunov} admits a unique solution if and only if the spectra of $A$ and $-A$ are disjoint.

A straightforward approach to solving~\eqref{eq:lyapunov} is to transform it into an equivalent linear system via vectorization:
\begin{equation}
\label{eq:vec_lyap}
\widetilde{\mathcal{A}} \widetilde{x} = \widetilde{b},
\end{equation}
where $\widetilde{\mathcal{A}} = I \otimes A + A \otimes I \in \mathbb{R}^{n^2 \times n^2}$, $\widetilde{x} = \operatorname{vec}(X) \in \mathbb{R}^{n^2}$, and $\widetilde{b} = -\operatorname{vec}(BB^T) \in \mathbb{R}^{n^2}$.

Alternatively, we can define the linear matrix operator
\begin{equation*}
\mathcal{A} : \mathbb{R}^{n \times n} \to \mathbb{R}^{n \times n}, \quad \mathcal{A}(X) = AX + XA^T.
\end{equation*}
Then, the Lyapunov matrix equation~\eqref{eq:lyapunov} can be expressed in operator form as
\begin{equation}
\label{eq:lyap_operator}
\mathcal{A}(X) = -BB^T.
\end{equation}
The RGl-GMRES method \cite{JBILOU1999} can be naturally applied to solve~\eqref{eq:lyap_operator} without explicitly forming the prohibitively large Kronecker-product matrix $\widetilde{\mathcal{A}}$. More specifically, the method is implemented by replacing each matrix-vector product with the action of the Lyapunov operator $\mathcal{A}(X) = AX + XA^T$.

Starting from an initial approximation $X_0 \in \mathbb{R}^{n \times n}$ with initial residual
\begin{equation*}
R_0 = -BB^T - \mathcal{A}(X_0) = -BB^T - AX_0 - X_0 A^T,
\end{equation*}
the RGl-GMRES method constructs the $k$-th approximate solution $X_k \in X_0 + \mathcal{K}_k(\mathcal{A}, R_0)$ by minimizing the sketched Frobenius norm of the residual:
\begin{equation*}
R_k = -BB^T - AX_k - X_k A^T.
\end{equation*}
This formulation has an important computational advantage. Indeed, although the vectorized formulation \eqref{eq:vec_lyap} involves the $n^2\times n^2$ matrix $\widetilde{\mathcal{A}}$, the RGl-GMRES approach operates directly on $n\times n$ matrices and only requires the evaluation of the operator $\mathcal{A}$. Consequently, the Kronecker-product matrix does not need to be explicitly constructed or stored, which can substantially reduce both the memory requirements and the computational cost when $n$ is large. The resulting procedure is summarized in Algorithm~\ref{alg:glgmres-lyapunov}.
%
%
%Algorithm \ref{alg:rGMRES}  can be reformulated for the solution  of the Lyapunov matrix equation \eqref{eq:lyapunov} as follows:
\begin{algorithm}[t]
\caption{RGl-GMRES for solving the Lyapunov matrix equation \eqref{eq:lyapunov}}
\label{alg:glgmres-lyapunov}
\begin{algorithmic}[1]
\STATE Choose an initial approximation $X_0$.
\STATE Compute the initial residual
\[
R_0=-BB^T-AX_0-X_0A^{\mathrm{T}},
\]
% and set
% \[
% V_1=\frac{R_0}{\|R_0\|_F}.
% \]
%
\FOR{$j=1,2,\ldots,k$}
    \STATE Apply Algorithm \ref{algo1x} with the operator
    \[
    \mathcal{A}(X)=AX+XA^{\mathrm{T}}
    \]
    to construct the sketched-orthonormal basis
    $V_1,V_2,\ldots,V_{k+1}$ of the matrix Krylov subspace
    \[
    \mathcal{K}_k(\mathcal{A},R_0)
    =
    \operatorname{span}
    \left\{
    R_0,\mathcal{A}(R_0),\ldots,\mathcal{A}^{k-1}(R_0)
    \right\}.
    \]
\ENDFOR
\STATE Solve the sketched least-squares problem
\[
y_k=
\underset{y\in\mathbb{R}^{k}}{\operatorname{arg\,min}}
\left\|
\|\Theta R_0\|_F e_1^{(k+1)}
-\bar{H}_k y
\right\|_2.
\]
\STATE Compute the approximate solution
\[
X_k = X_0 + \mathcal{V}_k\diamond y_k,\quad R_k = -BB^T - \mathcal{A}(X_k),
\]
where
\[
\mathcal{V}_k y_k
=
\sum_{i=1}^{k}y_k^{(i)}V_i.
\]
\STATE \textbf{Return} $X_k$.
\end{algorithmic}
\label{algo3}
\end{algorithm}
%\newpage
\section{Numerical Results}
\label{sec4}
In this section, we present numerical experiments to evaluate the performance of the proposed methods in solving \eqref{MRHS} as well as the Lyapunov matrix equation, and we compare their effectiveness with that of the other methods considered in this paper. Here we only consider Gl-GMRES and RGl-GMRES methods mentioned in Algorithm $2$ and Algorithm \ref{algo3},
respectively. 
For the randomized algorithms, several sketch sizes \(\ell\) and distributions for the sketching matrices \(\Theta\) are considered. In Step 3 of Algorithm 2, we employ Gaussian sketching. No significant difference in performance, in terms of stability or approximation accuracy for a fixed sketch size \(\ell\), was observed between Rademacher (or Gaussian) sketching and SRHT sketching. Therefore, in this section, we present only the results obtained using Gaussian sketching.
%For the first experiment, we consider the matrix \texttt{fs\_680\_1}, obtained from the
%SuiteSparse Matrix Collection (\url{https://sparse.tamu.edu}, accessed on 1 January 2025).
%The matrix is scaled to have a unit diagonal and is denoted by \texttt{fs\_680\_1c}.
%This sparse matrix has order $680$, contains $21{,}184$ nonzero entries, and has a
%condition number equal to $8.6944 
\subsection{Solving multiple linear systems}
The initial matrix $X_{0}$ is set to be a random matrix with suitable size. When solving linear systems with multiple right-hand sides (\ref{MRHS}) in Algorithms $2$ and $3$, the iterative process is terminated when the relative residual norm
\[
\frac{\|B-AX_{k}\|_{F}}{\|B-AX_0\|_{F}}
\]
falls below the prescribed tolerance $\tau=10^{-12}$, where $X_{k}$ denotes the approximate solution of (\ref{MRHS}). In the sense of the following terms:
\begin{itemize}
\item $\ell$: denotes the sketch size.
\item CPU: denotes the elapsed CPU time of the convergence in (seconds).
\item $\mathrm{Iter}$: the number of iterations for the Gl-GMRES method or the randomized Gl-GMRES method, depending on which one is used.
%\item  $\mathrm{Iter}_{\mathcal{P}Gl-GMRES}$: number of iterations of preconditioned Gl-GMRES method _{Gl-GMRES}
%\item  $\mathrm{Iter}_{RGl-GMRES}$: number of iterations of   
%\item  $\mathrm{Iter}_{\mathcal{P}RGl-GMRES}$: number of iterations of preconditioned  randomized\\ Gl-GMRES method,
\item $\mathrm{Res}$: norm of absolute  residual is defined as
\begin{equation*}
\mathrm{Res}=\|{B}-{A}X_{k}\|_{F}.
%\nonumber
\end{equation*}
%\item $\mathrm{RES}$: norm of absolute error is given as follows:
%\begin{eqnarray}
%\mathrm{ERR}=\|X^{*}-\mathrm{X}^{(k)}\|_{F}, \nonumber
%\end{eqnarray}
\end{itemize}
In addition, the right-hand sides are generated using randomly selected solution vectors. 
% All computations were performed on a computer with an Intel Core™ i7-870H CPU @ 2.93 GHz and 16 GB of RAM, using MATLAB R2017a. 
In Table~1, we present the number of iterations (Iter), CPU time, and residual norm (Res) obtained by the Gl-GMRES and RGl-GMRES methods for a range of problem sizes and different values of the sketch size $\ell$.
\begin{exe}
\label{ex1}
We consider the steady incompressible Navier--Stokes equations on the
square domain
\[
\Omega_{\square}=(-1,1)\times(-1,1).
\]
The computational configuration corresponds to a two-dimensional cavity
driven by the motion of its upper boundary. In particular, the lid moves
horizontally from left to right with prescribed velocity
$u_x=1$, while homogeneous Dirichlet conditions are imposed on the
remaining three sides of the cavity. This benchmark problem is commonly
used for assessing numerical methods for incompressible flow problems;
see, for example, \cite{Elman2007}.

For the spatial discretization, we employ the mixed
$\mathrm{Q}_{2}-\mathrm{P}_{1}$ finite-element formulation available in
the IFISS package \cite{Elman2007}. The velocity field is approximated
using biquadratic $\mathrm{Q}_{2}$ finite elements, whereas the pressure
is represented by piecewise linear $\mathrm{P}_{1}$ finite elements.
Thus, the resulting approximation satisfies the standard mixed
finite-element framework for the velocity--pressure formulation of the
Navier--Stokes equations.
The local arrangement of the velocity and pressure degrees of freedom
associated with a $\mathrm{Q}_{2}-\mathrm{P}_{1}$ element is depicted in
Figure~\ref{nodal}. The black markers correspond to the velocity degrees
of freedom, while the white markers represent the pressure degrees of
freedom. The axes $(\mathcal{E}_{1},\mathcal{E}_{2})$ indicate the local
coordinate system used for the element.
\begin{figure}[H]
\tikzstyle{quadri}=[circle,draw,fill=black,text=white]
\tikzstyle{quadri2}=[circle,draw,fill=white,text=black]
\begin{center}
\begin{tikzpicture}
	\draw[->] (0,0) -- (3,0) node[right] {$\mathcal{E}_{1}$};
	\draw[->] (0,0) -- (0,3) node[above] {$\mathcal{E}_{2}$};
\end{tikzpicture}
\hspace{1cm}
\begin{tikzpicture}
\node[quadri] (E) at(0,4) {};
\node[quadri] (A) at(-2,4) {};
\node[quadri] (Z) at(-4,4) {};
\node[quadri] (W) at(-4,1){};
\node[quadri] (P) at(-2,1){};
\node[quadri] (O) at(0,1){};
\node[quadri2] (H) at(-2,3.2){};
\node[quadri2] (K) at(-1.1,1.8){};
\node[quadri2] (Y) at(-2.7,1.8){};
\node[quadri] (X) at(0,2.5){};
\node[quadri] (I) at(-2,2.5){};
\node[quadri] (J) at(-4,2.5){};
\draw[-,=latex] (E)--(Z);
\draw[-,=latex] (Z)--(W);
\draw[-,=latex] (W)--(O);
\draw[-,=latex] (O)--(E);
\end{tikzpicture}
\end{center}
\caption{}\text{$\mathrm{Q}_{2}-\mathrm{P}_{1}$ element  $\left(\begin{tikzpicture}\node[quadri] (P) at (0,0)  {};\end{tikzpicture}\hspace{0.1cm}\text{ velocity node}; \begin{tikzpicture}\node[quadri2] (Q) at (0,0) {};\end{tikzpicture}\hspace{0.1cm} \text{pressure node} \right),\hspace{0.1cm}\text{local co-ordinate}  \left(\mathcal{E}_{1},\mathcal{E}_{2}\right)$.}
  \label{nodal}
\end{figure}
We thereby obtain the system matrix $A$ associated with the linear systems \eqref{MRHS}. To construct a hierarchy of discretized linear systems corresponding to successive mesh refinement levels $l = 7$ and $8$, we employ the IFISS software package developed by Elman et al.~\cite{Elman2007}. This framework provides the system matrix $A$ arising from the standard lid-driven cavity benchmark problem. Table~\ref{Table1} summarizes general information about the test problems, including the dimension $n$. In the following numerical results, we set the viscosity to $\nu = 1/100$.
\begin{table}[htpb]
\begin{center}
\begin{minipage}{\textwidth}
\caption{The size of the matrix $A$}
\vspace{0.5em}
\label{tab:lid}
\begin{tabular*}{\textwidth}{@{\extracolsep{\fill}}lcccccccccc@{\extracolsep{\fill}}}
\toprule%
& \multicolumn{3}{@{}c@{}}{Lid driven cavity\footnotemark[1]}  \\\cmidrule{2-4}%
$l$ & $n$ &size of $A$  
%& %size of $N_{k}$ 
%\\
%\midrule
%$4$  & $578$  &  $578\times 578$%& $578\times 578$ 
%\\
%$5$  & $2178$   & $2178\times 2178$ 
%& $2178\times 2178$ 
%\\
%$6$ & $8450$   &  $8450\times 8450$  
\\
$7$ & $33282$   &  $33282\times 33282$  
\\
$8$ & $132098$   &  $132098\times 132098$  
%& $8450\times 8450$ 
%\botrule
\end{tabular*}
\label{Table1}
\footnotetext[1]{Lid driven cavity.}
\end{minipage}
\end{center}
\end{table}
\end{exe}
A comparison between the Gl-GMRES and RGl-GMRES methods for solving the
multiple-right-hand-side linear system~\eqref{MRHS}, with
$n=33282$, is reported in Table~\ref{Tabconvd}.
The two methods are assessed according to three performance indicators:
the computational time (CPU), the number of iterations (Iter), and the
final residual norm (Res). The notation used in the table is defined as
follows:
\begin{itemize}
    \item \textbf{Gl-GMRES}: the global GMRES method applied to the
    linear system~\eqref{MRHS}.
    
    \item \textbf{RGl-GMRES}: the randomized global GMRES method, in which
    Gaussian sketching is used to construct the randomized inner products.
    
    \item \textbf{NRHS}: the number of right-hand sides, denoted by $s$.
\end{itemize}
\begin{table}[htbp]
\centering
\caption{Performance comparison of Gl-GMRES and RGl-GMRES for different numbers of right-hand sides (Example 1).}
\vspace{0.5em}
\label{Tabconvd}
\scriptsize
\setlength{\tabcolsep}{2.2pt}
\renewcommand{\arraystretch}{1.1}
\begin{tabular}{l c|ccc|ccc|ccc|ccc}
\hline
\multirow{2}{*}{Method} & \multirow{2}{*}{$\ell$}
& \multicolumn{3}{c|}{NRHS=50}
& \multicolumn{3}{c|}{NRHS=100}
& \multicolumn{3}{c|}{NRHS=200}
& \multicolumn{3}{c}{NRHS=300} \\
& & CPU & Iter & Res.
  & CPU & Iter & Res.
  & CPU & Iter & Res.
  & CPU & Iter & Res. \\
\hline
Gl-GMRES & --
& 24.11 & 205 & $4.00\! \times\!10^{-11}$
& 34.86  & 205 & $4.20\! \times\!10^{-11}$
& 68.87  & 205 & $4.40\! \times\!10^{-11}$
& 224.80 & 205 & $4.44\! \times\!10^{-11}$ \\
RGl-GMRES & 20
& 11.11 & 203 & $1.10\! \times\!10^{-9}$
& 18.03 & 203 & $1.18\! \times\!10^{-09}$
& 39.23 & 203 & $1.23\! \times\!10^{-09}$
& 184.05 & 203 & $1.27\! \times\!10^{-09}$ \\
         & 50
& 11.62 & 204 & $4.47\! \times\!10^{-11}$
& 20.25 & 204 & $4.67\! \times\!10^{-11}$
& 40.69 & 204 & $4.85\! \times\!10^{-11}$
& 193.49 & 204 & $4.87\! \times\!10^{-11}$ \\
        & 100
& 14.43 & 204 & $4.22\! \times\!10^{-11}$
& 25.15 & 204 & $4.42\! \times\!10^{-11}$
& 42.25 & 204 & $4.61\! \times\!10^{-11}$
& 194.23 & 204 & $4.64\! \times\!10^{-11}$ \\
\hline
\end{tabular}
%\label{tab2}
\end{table}
Table~\ref{Tabconvd} shows that RGl-GMRES consistently reduces the CPU time compared with standard Gl-GMRES while requiring essentially the same number of iterations, which is fully consistent with the theoretical analysis in Remark~\ref{rem:convergence_insights}. A smaller sketch size, $\ell=20$, provides the largest computational savings but yields slightly larger residuals of order $10^{-9}$. Increasing the sketch size to $\ell=50$ or $\ell=100$ recovers residuals of order $10^{-11}$, comparable to those of standard Gl-GMRES, at a moderate additional computational cost. These results indicate that RGl-GMRES provides an effective trade-off between computational efficiency and accuracy for multiple right-hand-side systems \eqref{MRHS}. 
Table~\ref{Tabconvd2} presents a comparison of the Gl-GMRES and RGl-GMRES methods for solving the multiple-right-hand-side linear system~\eqref{MRHS} for $n=132098$.

\begin{table}[htbp] \centering \caption{Performance comparison of Gl-GMRES and RGl-GMRES for different numbers of right-hand sides (Example 1) with $n=132098$.} \vspace{0.5em} \label{Tabconvd2}
\scriptsize
\setlength{\tabcolsep}{2.2pt}
\renewcommand{\arraystretch}{1.1}
\begin{tabular}{l c|ccc|ccc|ccc|} \hline \multirow{2}{*}{Method} & \multirow{2}{*}{$\ell$} & \multicolumn{3}{c|}{NRHS=10} & \multicolumn{3}{c|}{NRHS=20} & \multicolumn{3}{c|}{NRHS=30} \\ & & CPU & Iter & Res. & CPU & Iter & Res. & CPU & Iter & Res. \\ \hline Gl-GMRES & -- & 547.17 & 4957 & $1.15\! \times\!10^{-12}$ & 940.21 & 4497 & $1.15\! \times\!10^{-12}$ & 1302.23 & 4603 & $1.16\! \times\!10^{-12}$\\  RGl-GMRES & 20 & 8.43 & 201 & $4.70\! \times\!10^{-04}$ & 262.64 & 2537 & $2.08\! \times\!10^{-11}$ & 331.46 & 2166 & $1.75\! \times\!10^{-11}$ \\ 
& 50 & 140.80 & 2516 & $9.37\! \times\!10^{-12}$ & 266.05& 2552 & $4.11\! \times\!10^{-12}$ & 437.85 & 2631 & $5.59\! \times\!10^{-12}$ \\  & 100 & 154.02 & 2677 & $3.97\! \times\!10^{-12}$ & 284.34& 2660 & $2.54\! \times\!10^{-12}$ &438.58 & 2747 & $2.61\! \times\!10^{-12}$ 
\\ 
\hline 
\end{tabular}
\end{table}
Table~\ref{Tabconvd2} demonstrates that RGl-GMRES method significantly reduces the computational cost compared with Gl-GMRES, particularly as the number of right-hand sides increases. For $\ell=50$ and $\ell=100$, RGl-GMRES method achieves residual norms of approximately $10^{-12}$, comparable to those of Gl-GMRES, while requiring substantially less CPU time. Although increasing the sketch size slightly increases the computational cost, it improves the accuracy of the computed solution. Overall, the results indicate that $\ell=50$ provides a favorable balance between computational efficiency and numerical accuracy.

\begin{exe}(Complex nonsymmetric convection--diffusion problem \cite{ZhangGu2013})
 We consider the multiple right-hand-side linear system (\ref{MRHS}), 
where $A\in\mathbb{C}^{n\times n}$ is a sparse, complex, nonsymmetric matrix and $B\in\mathbb{C}^{n\times s}$ contains $s$ right-hand sides. We set $N=100$ and $n=N^2=10,000$. Let $I=I_N$ denote the $N\times N$ identity matrix. The one-dimensional diffusion and convection matrices are defined, respectively, by
$$
D_2 =
\begin{pmatrix}
2      & -1     & 0      & \cdots & 0      \\
-1     & 2      & -1     & \ddots & \vdots \\
0      & -1     & 2      & \ddots & 0      \\
\vdots & \ddots & \ddots & \ddots & -1     \\
0      & \cdots & 0      & -1     & 2
\end{pmatrix}
\in\mathbb{R}^{N\times N}.
\quad D_1 =
\frac{1}{2}
\begin{pmatrix}
0      & 1      & 0      & \cdots & 0      \\
-1     & 0      & 1      & \ddots & \vdots \\
0      & -1     & 0      & \ddots & 0      \\
\vdots & \ddots & \ddots & \ddots & 1      \\
0      & \cdots & 0      & -1     & 0
\end{pmatrix}
\in\mathbb{R}^{N\times N}.
$$

The real convection--diffusion component is given by
$$
A_r =
D_2\otimes I
+
I\otimes D_2
+
\varepsilon_c
\left(
D_1\otimes I
+
I\otimes D_1
\right),
$$
where the convection parameter is set to $\varepsilon_c=8$. We then introduce a purely imaginary shift to obtain the complex coefficient matrix
$$
A=A_r+i\omega I_n,
$$
with $\omega=3$. Hence,
$$
A =
D_2\otimes I
+
I\otimes D_2
+
\varepsilon_c
\left(
D_1\otimes I
+
I\otimes D_1
\right)
+
i\omega I_n.
$$
 The matrix $B$ is generated as
$$
B=B_r+iB_i,
$$
where $B_r,B_i\in\mathbb{R}^{n\times s}$ are random matrices with independent standard normally distributed entries, where $s=\mathrm{NRHS}$. 
\end{exe}
\begin{table}[!htbp]
\centering
\caption{Comparison of Gl-GMRES and RGl-GMRES methods for different numbers of right-hand sides (Example 2).}
\vspace{0.5em}
\label{tab:results_rhs}
\small
\setlength{\tabcolsep}{3pt}
\renewcommand{\arraystretch}{1.15}
\begin{tabular}{l c|ccc|ccc|ccc}
\hline
\multirow{2}{*}{Method} & \multirow{2}{*}{$\ell$} &
\multicolumn{3}{c|}{ NRHS=50} &
\multicolumn{3}{c|}{ NRHS=100} &
\multicolumn{3}{c}{NRHS=150} \\
& & CPU & Iter. & Res
  & CPU & Iter. & Res
  & CPU & Iter. & Res \\
\hline
Gl-GMRES & -- 
& 80.29 & 898 & $1.13\!\times\!10^{-10}$
& 158.22 & 898 & $1.13\!\times\!10^{-10}$
& 251.40 & 898 & $1.13\!\times\!10^{-10}$ \\

RGl-GMRES & 20
& 33.51 & 853 & $1.96\!\times\!10^{-9}$
& 59.61 & 868 & $2.04\!\times\!10^{-9}$
& 83.70 & 855 & $1.88\!\times\!10^{-9}$ \\

        & 50
& 33.82 & 853 & $1.09\!\times\!10^{-9}$
& 64.56 & 869 & $1.03\!\times\!10^{-9}$
& 99.26 & 866 & $1.08\!\times\!10^{-9}$ \\

        & 100
& 38.66 & 888 & $5.38\!\times\!10^{-10}$
& 72.16 & 891 & $5.72\!\times\!10^{-10}$
& 126.07 & 883 & $5.05\!\times\!10^{-10}$ \\
\hline
\end{tabular}
\end{table}
Table~\ref{tab:results_rhs} compares the performance of Gl-GMRES and RGl-GMRES as the number of right-hand sides increases. The results demonstrate that RGl-GMRES consistently outperforms Gl-GMRES in terms of computational time for all tested values of $s$. Moreover, the computational savings become increasingly significant as the number of right-hand sides grows. For instance, when $s=150$, the CPU time is reduced from $251.40$ seconds for Gl-GMRES to only $83.70$ seconds for RGl-GMRES with $\ell=20$, corresponding to a reduction of approximately $66.7\%$. 

Despite this substantial reduction in computational cost, the number of iterations required by RGl-GMRES remains comparable to that of Gl-GMRES. Furthermore, the final relative residuals produced by RGl-GMRES remain below $2.1\times10^{-9}$ for all tested values of $\ell$, indicating that the randomized approach maintains a high level of numerical accuracy. Increasing the sketch dimension from $\ell=20$ to $\ell=100$ generally yields smaller residual norms; however, this improvement comes at the expense of increased CPU time. Overall, these results demonstrate that a relatively small sketch dimension, such as $\ell=20$, provides an effective trade-off between computational efficiency and numerical accuracy, particularly for linear systems with multiple right-hand sides \eqref{MRHS}.
%%%%%%%%%%%%%%%%%%%%%%%%
\subsection{Numerical Experiments for the Lyapunov Equation}
%%%%%%%%%%%%%%%%%%%%%%%%
Here we consider the large-scale continuous-time Lyapunov equation~\eqref{eq:lyapunov} with coefficient matrix $A$ obtained from a 5-point finite-difference discretization of the differential operator
\begin{equation*}
\mathcal{L}(u)
=
\Delta u
-f_1(x,y)\frac{\partial u}{\partial x}
-f_2(x,y)\frac{\partial u}{\partial y}
-g(x,y)u,
%\label{eq:operator2}
\end{equation*}
on the unit square $[0,1]\times[0,1]$, subject to homogeneous Dirichlet boundary conditions.

The coefficient functions are chosen as
\begin{equation*}
f_1(x,y)=\sin(x+2y),\quad
f_2(x,y)=e^y,\quad
g(x,y)=xy.
%\label{eq:coefficients}
\end{equation*}
The resulting sparse matrix $A$ has dimension
$
n=n_0^2
%\label{eq:dimension}
$,
where $n_0$ denotes the number of interior grid points in each spatial direction. In the numerical experiment, we set
\begin{equation}
n=10000,\qquad s=50.
\label{eq:parameters}
\end{equation}
The matrix $B\in\mathbb{R}^{n\times s}$ is generated with entries independently drawn from the uniform distribution on $[0,1]$. The initial approximation is chosen as $X_0=0$.

We compare the performance of the Lyapunov Gl-GMRES \cite[Algorithm 5.1]{JBILOU1999} and Lyapunov RGl-GMRES (i.e., Algorithm \ref{algo3}) methods. For both methods, the same convergence criterion is employed in order to ensure a fair comparison. Given an approximate solution $X_k$, the corresponding Lyapunov residual is defined by
\begin{equation}
R_k
=
-BB^T-AX_k-X_kA^T.
\label{eq:residual}
\end{equation}
The iterations are terminated when the relative Frobenius norm of the residual satisfies
\begin{equation}
\frac{\|R_k\|_F}{\|R_0\|_F}<10^{-7}.
\label{eq:stopping-global}
\end{equation}
This criterion is used for both algorithms so that their convergence and computational performance can be compared under the same prescribed accuracy.

For each method, we record the number of iterations required to satisfy
the stopping criterion, together with the corresponding computational
time and other relevant performance measures. The numerical results are
reported in Table \ref{tab:lyapunov_sketching}, where
\begin{equation*}
Res =\|R_k\|_{F}.
\end{equation*}
\begin{table}[htbp]
\centering
\caption{Comparison of the Lyapunov Gl-GMRES and Lyapunov RGl-GMRES methods for different sketching sizes.}
\vspace{0.5em}
%\label{tab:lyapunov_sketching}
%\small
\setlength{\tabcolsep}{6pt}
\renewcommand{\arraystretch}{1.15}
\begin{tabular}{l c c c c}
\hline
Method & $\ell$ & CPU & Iter. & Res \\
\hline
Lyapunov Gl-GMRES
& --
& 70.25
& 434  &$7.75 \!\times\!10^{-7}$\\
Lyapunov RGl-GMRES
& 20
& 1.47
& 420 &$9.98 \!\times\!10^{-7}$\\
& 60
&  1.51
&426&$9.68 \!\times\!10^{-7}$\\
& 100
& 1.62
& 430&$8.62 \!\times\!10^{-7}$ \\
\hline
\end{tabular}
\label{tab:lyapunov_sketching}
\end{table}
Table~\ref{tab:lyapunov_sketching} demonstrates the computational benefits of incorporating randomized sketching into the Gl-GMRES framework for solving the Lyapunov matrix equation. The standard Gl-GMRES method requires 70.25 seconds and 434 iterations to achieve a residual of $7.75\times10^{-7}$. By comparison, the RGl-GMRES method requires only 1.47, 1.51, and 1.62 seconds for sketching dimensions $\ell=20$, $60$, and $100$, respectively, with corresponding residuals of $9.98\times10^{-7}$, $9.68\times10^{-7}$, and $8.62\times10^{-7}$. Thus, the randomized formulation achieves a reduction in CPU time of more than an order of magnitude while maintaining residuals of the same order as those obtained by the standard method. The results also indicate that increasing the sketching dimension has only a moderate effect on the computational cost, while the convergence behavior remains largely unchanged. In particular, even the smallest sketch size considered, $\ell=20$, provides satisfactory accuracy at a substantially lower computational cost. This confirms the effectiveness of randomized sketching as a computationally efficient mechanism for reducing the cost of the Gl-GMRES process in large-scale Lyapunov problems.
\section{Conclusion}
\label{sec5}
We have developed a randomized variant of the Gl-GMRES method,
referred to as RGl-GMRES, for the efficient solution of large-scale
linear systems with multiple right-hand sides. The method incorporates
Gaussian sketching into the global Krylov subspace framework, allowing the
computational work associated with the matrix inner products to be
reduced while retaining the essential convergence properties of the
underlying Gl-GMRES procedure.

The numerical results presented in Section~\ref{sec4} show that RGl-GMRES can
provide substantial computational savings compared with the standard
Gl-GMRES method. In particular, the experiments indicate significant
reductions in CPU time, with savings approaching $50\%$ for several of
the tested configurations. The advantage of the randomized approach
becomes more noticeable as the number of right-hand sides increases,
which highlights its potential for large linear systems with multiple right-hand sides~\eqref{MRHS}. At the same time, the computed
solutions remain consistent with the prescribed convergence criteria,
indicating that the reduction in computational cost does not come at
the expense of an unacceptable loss of accuracy.

These observations suggest that randomized sketching can be an
effective tool for reducing the cost of global Krylov methods when
several right-hand sides have to be treated simultaneously. Further
investigation is nevertheless needed to assess the behavior of
RGl-GMRES for a broader range of problem classes, matrix dimensions,
sketch sizes, and stopping tolerances.

An interesting extension of this work is the application of the
randomized global Krylov subspace framework to matrix equations, including
Sylvester and Lyapunov matrix equations as well as more general linear matrix
equations. Such problems possess additional block and, in many
applications, low-rank structures that may be exploited within a
randomized projection framework. Developing appropriate randomized
global Krylov subspace algorithms for these settings could provide further reductions
in computational and storage costs while preserving satisfactory
convergence and accuracy.
%%%%%%%%%%%%%%%
\section*{Acknowledgment}
The work of X.-M. Gu was supported by the Key Laboratory of Large-Scale Electromagnetic Industrial Software, Ministry of Education (EMCAE202502).
\section*{Ethics declarations}
\subsection*{Conflict of interest}
The authors declare no competing interests.
\subsection*{Data availability}
The data that support the findings of this study are available in the article. The source code used in this study is available from the authors upon reasonable request.
\subsection*{Contribution statement}
A. Badahmane: Conceptualization (original idea), Methodology, Formal analysis, Software, Validation,
Investigation, Data curation, Visualization, Writing – original draft, Writing – review \& editing.
I. Ezzagouri: Formal analysis,  Writing – original draft.
Xian-Ming. Gu: Formal analysis,  Writing – original draft, Writing – review \& editing, 
%%%%%%%%%%%%%%%%%%%%%%%%%%%%
\bibliographystyle{elsarticle-num-names}
%\begin{thebibliography}{1}
%\bibliographystyle{elsarticle-num}   % 设定引用样式，如 plain, unsrt, ieeetr 等。很多期刊会提供自己的 .bst 文件，比如 \bibliographystyle{etna}
\bibliography{references}   % 这里写你的 .bib 文件名，**不需要**加 .bib 后缀

\end{document}